\documentclass[12pt, a4paper]{article}
\usepackage{geometry, microtype}
\usepackage{titlesec} 
\usepackage{amsfonts}
\usepackage{amsthm}
\usepackage{amssymb}
\usepackage{amsmath}
\usepackage{theoremref}
\usepackage{enumerate}
\usepackage{bbm}
\usepackage{bm}
\usepackage{mathtools}
\usepackage{xcolor}

\makeatletter
\newcommand{\proofpart}[2]{%
  \par
  \addvspace{\medskipamount}%
  \noindent\emph{Step #1: #2}\par\nobreak
  \addvspace{\smallskipamount}%
  \@afterheading
}
\makeatother

\DeclarePairedDelimiter\abs{\lvert}{\rvert}%
\DeclarePairedDelimiter\norm{\lVert}{\rVert}%

\makeatletter
\let\oldabs\abs
\def\abs{\@ifstar{\oldabs}{\oldabs*}}
\let\oldnorm\norm
\def\norm{\@ifstar{\oldnorm}{\oldnorm*}}
\makeatother

\makeatletter
\g@addto@macro\bfseries{\boldmath}
\makeatother

\newcommand{\T}{\mathbb{T}}

\newcommand{\Ka}{\mathcal{K}}
\newcommand{\conj}[1]{\overline{#1}}
\newcommand{\R}{\mathbb{R}}
\newcommand{\supp}[1]{\text{supp}({#1})}

\DeclareMathOperator{\Caplp}{Cap_{\ell^\mathit{p}}}

\newtheorem{thm}{Theorem}[section]
\newtheorem{lemma}[thm]{Lemma}
\newtheorem{cor}[thm]{Corollary}
\newtheorem{prop}[thm]{Proposition}
\theoremstyle{definition}

\newcommand{\Addresses}{{
    \bigskip
    \footnotesize
    \noindent
    Adem Limani, \\ \textsc{Lund University, Centre for Mathematical Sciences,  \\
      Box 118, 221 00 Lund, Sweden}\\
    \texttt{adem.limani@math.lu.se}
    
    \medskip
    \noindent
    Tomas Persson, \\ \textsc{Lund University, Centre for Mathematical Sciences, \\
      Box 118, 221 00 Lund, Sweden}\\
    \texttt{tomasp@gmx.com}		
  }}

\begin{document}
\title{\textbf{Asymmetric uniqueness sets in $\ell^q$}}

\author{Adem Limani \& Tomas Persson} 

\date{\today}


\maketitle

\begin{abstract}
  \noindent We exhibit an asymmetry phenomenon for uniqueness
  sets in $\ell^q$. Specifically, we construct sets that do not
  support measures with $\ell^q$-summable Fourier coefficients,
  yet simultaneously support measures whose positive frequencies decay faster than polynomials. In
  the language of Fourier uniqueness, this highlights a striking
  divergence between the unilateral and bilateral $\ell^q$
  uniqueness problems.
\end{abstract}

\section{Introduction}
\subsection{Unilateral versus bilateral Fourier decay}

Let $\T \cong [0,1)$ denote the unit circle and let $M(\T)$ be
the space of complex finite Borel measures on $\T$.  The purpose
of this paper is to demonstrate a pronounced asymmetry between
unilateral and bilateral Fourier decay at the level of supporting
sets.

For $1<q<2$, we construct compact sets $E \subset \T$ of Lebesgue
measure arbitrarily close to $1$ with the following property:

\begin{quote}
  $E$ supports a nontrivial measure $\mu \in M(\T)$ whose one-sided sequence of
  Fourier coefficients $\{\widehat{\mu}(n)\}_{n>0}$ belong to $\ell^q$, yet $E$
  supports no nontrivial measure $\mu \in M(\T)$ whose two-sided sequence of
  Fourier coefficients $\{\widehat{\mu}(n)\}_{n\in \mathbb{Z}}$ belongs to $\ell^p$ for any
  $0<p<2$.
\end{quote}

Recall that a compact set $E$ is called a \emph{uniqueness set
  for $\ell^q$} if it supports no nontrivial
$\mu \in M(\T)$ whose Fourier coefficients lie in
$\ell^q$:
\[
\sum_{n \in \mathbb{Z}} \abs{\widehat \mu (n)}^q < \infty.
\]
We denote by $M(E)$ the subset of $M(\T)$ consisting of measures
supported in a compact set $E\subset\T$. In the regime
$0<q\le 2$, any measure with $\ell^q$-summable Fourier
coefficients is absolutely continuous with respect to the
unit-normalized Lebesgue measure $dm$ on $\T$, with density in
$L^2(\T,dm)$.

Our main theorem exhibits a substantially stronger asymmetry
phenomenon than the aforementioned result.

\begin{thm}\thlabel{THM:MAIN}
There exists a compact set $E\subset \T$ of Lebesgue measure arbitrarily close to $1$ such that:
\begin{enumerate}
\item[(i)] Every nontrivial complex Borel measure $\mu$ with
  support in $E$ satisfies
  \[
    \sum_{n\in\mathbb Z} |\widehat\mu(n)|^q = \infty,
    \qquad 0<q<2.
  \]
\item[(ii)] There exists $f\in L^\infty(\T,dm)$ supported in $E$
  such that for every $M>0$ there exists $C(M)>0$ with
  \[
    |\widehat f(n)| \le C(M) n^{-M},
    \qquad n=1,2,3,\ldots .
  \]
\end{enumerate}
\end{thm}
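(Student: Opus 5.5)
We build $E=\T\setminus U$ with $U=\bigcup_k I_k$ a union of disjoint open arcs of total length $<\varepsilon$, and tune the arcs so that (i) and (ii) hold at once. The guiding principle is classical. A set $E$ of positive measure supports some $f\in L^\infty$ with one-sided rapid decay precisely when $\mathbbm{1}_E$ "looks analytic" enough. Concretely, we want an outer-type function $g=e^{h}$, with $\operatorname{Re} h=-\infty$ on $U$ in a controlled way, whose negative-frequency part is smooth. In the other direction, $E$ is an $\ell^q$-uniqueness set for all $q<2$ as soon as $E$ has no interior and the complementary arcs accumulate fast enough that an $L^2$ density $\phi$ vanishing a.e. off $E$ cannot have $\widehat\phi\in\ell^q$. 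This is where a capacity condition (Beurling–Carleson type, the $\Caplp$ notation) enters.

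\emph{Step 1: uniqueness.} Let $\mu\in M(E)$ with $\widehat\mu\in\ell^q$, $q<2$. Then $d\mu=\phi\,dm$, and by Hausdorff–Young $\phi$ lies in $L^{q'}$ for every $q'$ up to $\infty$ as $q\to1$. More usefully, $\phi$ belongs to the Besov-type space $A^q=\{\phi:\widehat\phi\in\ell^q\}$. For $q<2$, functions in $A^q$ carry a fractional smoothness of order $s=1/q-1/2>0$ in the $L^2$ sense. Hence $\sum_k |I_k|^{-2s}\int_{I_k^\ast}|\phi|^2<\infty$ near the arcs, where $I_k^\ast$ is a neighbourhood of $I_k$; this is a Hardy-type inequality for functions vanishing on $I_k$. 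We choose the arcs $I_k$ to be dense in $\T$ and so numerous at every scale (many tiny arcs clustered everywhere) that this finiteness forces $\phi=0$ a.e. The inequality shows $\phi$ is $L^2$-small on a set whose relative density tends to $1$ at each point, and a Lebesgue-density argument then gives $\phi=0$. Equivalently, we make $\Caplp$ of $U\cap J$ comparable to $|J|$ for every arc $J$ and every $p$, and conclude via a Khrushchev-style capacity criterion for $\ell^p$-uniqueness.

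\emph{Step 2: one-sided decay.} We need $f$ supported on $E$ with $\widehat f(n)=O(n^{-M})$ for $n>0$, i.e.\ $\conj{f}$ is "smooth analytic up to $H^2$". The plan is to take $f=\conj{F}\,\mathbbm{1}_E$, where $F$ is analytic in the disc, $C^\infty$ up to $\T$, and vanishes to infinite order on $\conj{U}$. Then $f=\conj{F}$ a.e., so $\widehat f(n)=\conj{\widehat F(-n)}=0$ for $n>0$. This is too strong, though: a nonzero $F\in C^\infty_A$ can vanish only on a Carleson set ($\sum|I_k|\log(1/|I_k|)<\infty$), while Step 1 needs $U$ dense. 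The correction is to replace exact vanishing by a product $f=\mathbbm{1}_E\, \conj{F}$ in which $F$ is bounded analytic with $|F|=1$ on $E$ and tiny on $U$. Writing $\mathbbm{1}_E\conj F=\conj F-\mathbbm{1}_U\conj F$, the positive frequencies come only from $\mathbbm{1}_U\conj{F}$, which we require to be $C^\infty$-small. For this we take $F=\exp(-\sum_k c_k\,\mathcal{H}\text{-bumps})$: an outer function whose modulus is $e^{-w}$, with $w$ a smooth weight equal to $0$ on $E$ and enormous on each $I_k$. We then choose the $c_k$, and the flat cutoffs inside the arcs, so that $\mathbbm{1}_U\conj F$ together with all its derivatives sums to a convergent series. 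The admissible density of arcs in Step 1 has to be balanced against $\int\log|F|>-\infty$, which requires $\sum_k\int_{I_k}w<\infty$.

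\emph{Main obstacle.} The hard part is compatibility. Step 1 wants arcs so numerous that every point is a density point of $U$ in the capacity sense. Step 2 wants $\log$-integrability of the weight together with smooth $C^\infty$ control of the outer factor's conjugate function across infinitely many dense arcs. I would attack this with a lacunary multiscale construction. At generation $j$ we place arcs of length $\delta_j$ (super-exponentially small) at spacing $\eta_j\gg\delta_j$ with $\sum_j\delta_j/\eta_j<\varepsilon$, and give each generation a weight of height $\log(1/\delta_j)^{A_j}$. We then check two things: the Step 1 Hardy sum diverges for any nonzero $\phi$, since $\sum_j(\delta_j/\eta_j)\,\delta_j^{-2s}=\infty$; and the Step 2 derivative bounds at generation $j$ are of order $\delta_j^{-M}e^{-\log(1/\delta_j)^{A_j}}$, which are summable.
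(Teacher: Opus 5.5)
\textbf{Both steps have genuine gaps.}

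\emph{Step~1.} Its key input is false. $\widehat\phi\in\ell^q$ with $q<2$ gives extra integrability ($\phi\in L^{q/(q-1)}$ by Hausdorff--Young), but no fractional smoothness at all. The lacunary series $\sum_k k^{-2}\zeta^{2^k}$ lies in $A_1$ but in no Sobolev space $H^s$ with $s>0$. More to the point, $\|\phi(\zeta^N)\|_{A_q}=\|\phi\|_{A_q}$ for every $N$, so the $A_q$-norm cannot see the length scale of the arcs. Hence no Hardy-type bound $\sum_k|I_k|^{-2s}\int_{I_k^\ast}|\phi|^2<\infty$ can force $\phi=0$. Nor can a Wiener/Hedberg-type thickness of $U$: your ``$\Caplp(U\cap J)\approx|J|$'' criterion has no analogue for $A_q$. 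Your divergence test $\sum_j(\delta_j/\eta_j)\delta_j^{-2s}=\infty$ holds automatically once the arcs are small compared with a power of their relative density. Yet \thref{PROP:NUNIQAr} gives sets that support a nonnegative $f\in A_r$. They come from periodically placed arcs with relative densities $\rho_j$, $\sum_j\rho_j^{r-1}<\infty$, and sufficiently fast-growing periods. Your criterion would declare these to be $A_r$-uniqueness sets.

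The density argument also clashes with $m(U)<\varepsilon$: almost every point of $E$ is a density point of $E$, not of $U$.

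What drives uniqueness in the paper is arithmetic, and the relevant parameter is the relative density. In the paper's notation, $\delta_j$ is your $\delta_j/\eta_j$ and the arcs have length $\delta_j/N_j$. The set $U_{N_j,\delta_j}$ consists of $N_j$ equally spaced arcs. The test function $\phi_j(\zeta)=\phi_{\delta_j,l_j}(\zeta^{N_j})$ vanishes on $E$, has $\widehat{\phi_j}(0)=1$, has spectrum in $N_j\mathbb{Z}$, and has negligible mass beyond the frequencies $N_jn$ with $|n|\le M_j\approx\delta_j^{-1}\log(1/\delta_j)$. Testing $\int\phi_j\,d\mu=0$ gives $\sum_{0<|n|\le M_j}|\widehat\mu(N_jn)|\gtrsim1$. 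H\"older turns this into $\ell^q$-mass $\gtrsim(\delta_j/\log(1/\delta_j))^{q-1}$ on the block $\Lambda_j$. Disjoint blocks then give divergence whenever $\sum_j(\delta_j/\log(1/\delta_j))^{q-1}=\infty$.

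\emph{Step~2.} This step is internally inconsistent. If $w=0$ on $E$ and $w$ is smooth on each $\overline{I_k}$, then $|F|=e^{-w}$ is close to $1$ inside $I_k$ near each endpoint. So $\mathbbm{1}_U\conj{F}$ vanishes on $E$ but has modulus near $1$ on pieces of arcs accumulating at every Lebesgue point of $E$. It is therefore not even continuous, let alone $C^\infty$-small. Your bound $\delta_j^{-M}e^{-\log(1/\delta_j)^{A_j}}$ only controls the plateau of each bump, not the transition zone where $w$ climbs from $0$.

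More importantly, you never address the real constraint. By \thref{THM:KHRUSHMAIN}, property (ii) is governed by Beurling--Carleson entropy, which for your multiscale set is about $\sum_j(\delta_j/\eta_j)\log(1/\delta_j)$. With super-exponentially small arcs, keeping this finite forces exponentially small relative densities. Then $\sum_j(\delta_j/\eta_j)^{q-1}<\infty$ and the uniqueness mechanism collapses. By the remark after \thref{PROP:NUNIQAr}, for fast-growing periodic placements such sets even support a nonnegative $f$ with $\widehat f\in\bigcap_{r>1}\ell^r$.

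The missing idea is the balance in \thref{LEM:blocksep}. A pigeonhole count keeps the blocks $\Lambda_j=\{N_jn:0<|n|\le M_j\}$ disjoint with $N_j\lesssim M_j\sum_{k\le j}M_k$, which is polynomial in $j$. Then \thref{LEM:BC} gives $\mathcal E(E)\lesssim\sum_j\delta_j\log(N_j/\delta_j)$. For $\delta_j=c/(j(\log j)^a)$ this is $\asymp\sum_j\delta_j\log j<\infty$ when $a>2$, while $\sum_j(\delta_j/\log(1/\delta_j))^{q-1}=\infty$ for every $q<2$. Property (ii) then follows from Khrushchev's theorem, with the Limani--Malman construction giving $f\in L^\infty(E)$. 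No hand-built outer function is needed.
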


The exponent $q=2$ is the critical threshold, since Parseval's
identity ensures that the indicator function of every set of
positive Lebesgue measure has Fourier coefficients in
$\ell^2$.  The theorem therefore shows that one may
approach the $\ell^2$ threshold arbitrarily closely
from below while retaining a substantial unilateral–bilateral
asymmetry on the same supporting set. A non-periodic analogue in
the setting of $L^q(\R)$ is obtained in Section~5.

Uniqueness sets for $\ell^q$ were constructed in the
classical works of D.~Newman \cite{newman1964closure},
Y.~Katznelson \cite{katznelson1964sets}, and
I.~Hirschman--Y.~Katznelson \cite{hirschman1965sets}. See also
J.~Rosenblatt and K.~Shuman in \cite{rosenblatt2003cyclic}. In
those constructions, however, no quantitative control on the
Beurling--Carleson entropy (see Theorem~\ref{THM:KHRUSHMAIN} for
definition) is available.

A central novelty of the present work is to device a new
construction which makes simultaneous and explicit control of
Fourier concentration and Beurling--Carleson entropy.

In \thref{THM:MAIN} we constructed supporting sets which allow
for exceptional \emph{unilateral} Fourier decay, while forcing
bilateral decay to remain at the $\ell^2$-threshold. Our next result
exhibits the complementary phenomenon, allowing us to construct
sets that support bilateral decay arbitrarily close to
$\ell^1$, yet prohibit \emph{any} prescribed uniform
unilateral Fourier decay.

\begin{thm}\thlabel{THM:MAIN2}
Let $\{\Omega(n)\}_n$ be positive 
real numbers with $\Omega(n) \downarrow 0$.  Then there exists a compact sets $E\subset \T$ of Lebesgue measure arbitrarily close to $1$ such that:

\begin{enumerate}
\item[(i)] 
There exists a non-negative function $f\in L^2(\T)$ supported on $E$
whose Fourier coefficients satisfy
\[
  \{\widehat f(n)\}_{n\in \mathbb{Z}} \in \bigcap_{r>1} \ell^r .
\]

\item[(ii)]
If $\mu\in M(E)$ satisfies the uniform unilateral estimate
\[
|\widehat\mu(n)|\le \Omega(n),
\qquad n=1,2,3,\ldots,
\]
then $\mu\equiv 0$.
\end{enumerate}
\end{thm}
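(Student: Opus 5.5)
We construct $E$ as the complement of a union of small open intervals,
\[
  E=\T\setminus \bigcup_{k\ge 1} U_k ,
\]
where $m(\bigcup_k U_k)$ is as small as we like, and we use two competing properties of $E$.

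\textbf{Mechanism for (ii).} This is a Khrushchev-type uniqueness statement: a set with \emph{infinite} Beurling--Carleson entropy $\sum_I |I|\log(1/|I|)$ over its complementary intervals, arranged suitably, supports no nontrivial measure with uniform one-sided decay $\widehat\mu(n)\to 0$ at any prescribed rate. I will therefore invoke the result stated as Theorem~\ref{THM:KHRUSHMAIN}, in the form: if $E$ is a uniqueness set for the relevant unilateral (analytic-type) class, then $\mu\in M(E)$ with $|\widehat\mu(n)|\le\Omega(n)$, $n\ge1$, must vanish.

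\textbf{Reduction of (ii) to a nonsummability condition.} If Theorem~\ref{THM:KHRUSHMAIN} is a Beurling--Carleson-type characterization, the task is to arrange the complementary intervals so that the associated entropy-type sum diverges relative to $\Omega$. Concretely, given $\Omega\downarrow 0$, choose scales $\delta_k\downarrow0$ and, at step $k$, remove $N_k$ equally spaced intervals of length $\delta_k$ from each surviving arc. Arrange $N_k\delta_k\le \varepsilon 2^{-k}$, which keeps the measure near $1$, while making $\sum_k N_k\delta_k\log(1/\delta_k)$ diverge. The divergence should be calibrated against $\Omega$, for instance via $\log(1/\Omega(n))$ on the frequency band $n\sim 1/\delta_k$.

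\textbf{Construction for (i).} Take $f=\prod_k \phi_k$, where each $\phi_k$ is a smooth nonnegative cutoff that vanishes on $U_k$ and equals $1$ away from a $\delta_k$-neighbourhood of $U_k$. Each factor $1-\phi_k$ is a sum of $N_k$ bumps of width $\delta_k$. Its Fourier coefficients are of size $\approx N_k\delta_k$ on a lattice of frequencies of spacing $N_k$ up to $1/\delta_k$, which gives
\[
  \|\widehat{1-\phi_k}\|_{\ell^r}^r\approx (N_k\delta_k)^r\,(1/(N_k\delta_k)).
\]
This is summable in $k$ for every $r>1$ once $N_k\delta_k$ decays geometrically. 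Using $\|\widehat{gh}\|_{\ell^r}\le\|\widehat g\|_{\ell^1}\|\widehat h\|_{\ell^r}$ is too lossy, so I would instead control the infinite product through the sparse, lacunary spectral structure of the factors: choose the frequencies $1/\delta_k$ super-lacunary so that the convolutions are essentially disjoint sums. This yields $\widehat f\in\bigcap_{r>1}\ell^r$ with $f\ge0$, $f\not\equiv0$ and $\operatorname{supp}f\subset E$.

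\textbf{Main obstacle.} The hard step is reconciling (i) and (ii). The entropy sum must diverge, which requires many short intervals, while the product $f$ must keep $\ell^r$ control for every $r>1$. The factor $\log(1/\delta_k)$ is exactly the gap: near-$\ell^1$ bilateral decay tolerates the total length $\sum N_k\delta_k<\infty$, but not the logarithmic weight. I would first try choosing $N_k\delta_k\approx 1/(k\log^2 k)$ with $\delta_k$ extremely small, so that $\sum N_k\delta_k\log(1/\delta_k)=\infty$ while $\sum (N_k\delta_k)^{r-1}<\infty$ for all $r>1$. If needed, I would replace the product $\prod_k\phi_k$ by a careful limiting argument to make this choice work.
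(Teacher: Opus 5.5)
There is a genuine gap in your treatment of (ii). Khrushchev's theorem, as quoted in Section~2, characterizes the sets carrying a nontrivial measure whose Cauchy transform is $C^\infty$ on $\T$, that is, whose positive Fourier coefficients decay faster than every power of $n$. Suppose your set has no compact subset of positive measure with finite entropy; ``arranged suitably'' does not establish this. Even then, the contrapositive gives uniqueness only for \emph{superpolynomial} one-sided decay. It says nothing about a slowly decaying majorant such as $\Omega(n)=1/\log\log(n+3)$, and such majorants are the whole content of (ii).

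In fact no property of $E$ chosen independently of $\Omega$ can work. For every compact $E$ of positive measure, the nontrivial measure $\mathbbm{1}_E\,dm$ satisfies $|\widehat{\mathbbm{1}_E}(n)|\le \Omega_E(n):=1/n+\sup_{m\ge n}|\widehat{\mathbbm{1}_E}(m)|$, and $\Omega_E\downarrow 0$ by Riemann--Lebesgue. Your proposed calibration of the entropy ``via $\log(1/\Omega(n))$ on the band $n\sim 1/\delta_k$'' is not backed by any available theorem. The reformulation ``if $E$ is a uniqueness set for the relevant class, then \ldots'' merely restates what has to be proved.

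The missing idea is the paper's analytic simultaneous-approximation scheme, which bypasses entropy entirely.
\begin{itemize}
\item Take outer functions $F_j=1-\exp(H\chi_j)$, where $H\chi_j$ is the Herglotz integral of a smooth mean-zero $\chi_j$ equal to $\log\varepsilon_j$ off $I(\delta_j)$. Then $F_j(0)=0$ and $|F_j-1|\le\varepsilon_j$ off $I(\delta_j)$.
\item Put $f_j(z)=F_j(z^{N_j})$. Then $f_j\to 1$ uniformly on $E$.
\item Since $f_j(0)=0$ and $\Omega$ is decreasing, $\sum_{k\ge1}|\widehat{f_j}(k)|\,\Omega(k)\le\Omega(N_j)\|F_j\|_{A_1}\le\varepsilon_j$ once $N_j$ is large enough.
\item For each $N\ge 0$, pairing $\conj{f_j}\,\conj{\zeta}^{N}$ with $\mu$ gives $|\widehat\mu(N)|\le\limsup_j\sum_{k\ge1}|\widehat{f_j}(k)|\,\Omega(k+N)=0$.
\item The F.~and M.~Riesz theorem then gives $d\mu=h\,dm$ with $\conj{h}\in H^1$ vanishing on $\T\setminus E$, a set of positive measure. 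Hence $\mu\equiv 0$.
\end{itemize}

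This argument only needs $N_j$ to grow fast, and so does (i). The ``main obstacle'' you describe is therefore illusory: both properties are obtained simultaneously, with no entropy bookkeeping.

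Your sketch of (i) is essentially the paper's Hirschman--Katznelson almost-orthogonality argument. It yields $\|1-\psi_k\|_{A_r}\le\exp(c\,\delta_k^{r-1})$ and a multiplicative bound on the partial products. The relevant separation condition is that $N_k$ exceed the effective spectral width of the previous partial product, not that the $1/\delta_k$ be lacunary. Your fallback choice $N_k\delta_k\approx 1/(k\log^2 k)$ would destroy (i), since $\sum_k (k\log^2 k)^{-(r-1)}=\infty$ for every $r<2$.
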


\subsection{Background}

The study of unilateral versus bilateral spectral behaviour has
long historic roots in harmonic analysis. A classical theorem of
A.~Rajchman \cite{rajchman1929classe} asserts that for any $\mu \in M(\T)$,
\[
\widehat\mu(n)\to0 \ \text{as } n\to+\infty
\quad\Longrightarrow\quad
\widehat\mu(n)\to0 \ \text{as } |n|\to\infty.
\]
Quantitative refinements and related symmetry principles were
obtained by K.~de~Leeuw and Y.~Katznelson \cite{deleeuw1970two}
and by J.-P.~Kahane and Y.~Katznelson \cite{kahane1970algebres}.
For broader perspective on so-called Rajchman measures and
uniqueness phenomena, see the surveys of T.~W.~K\"orner
\cite{korner1992sets} and R.~Lyons \cite{lyons2020seventy}.

More delicate weighted symmetry phenomena were discovered by
S.~Khrushchev and W.~Peller \cite{peller1982hankel}, who proved
that
\[
\sum_{n>0} \frac{|\widehat\mu(n)|^2}{n+1} < \infty
\quad\Longrightarrow\quad
\sum_{n\in\mathbb Z} \frac{|\widehat\mu(n)|^2}{|n|+1} < \infty.
\]
This result relies on deep best approximation techniques, that
allow one to reconstruct measures from their Cauchy transforms,
see \cite[Theorem~3.16]{peller1982hankel}. However, as was noted
by G.~Kozma and A.~Olevskii \cite{kozma2013singular}, this
results fails for fractional weights $(n+1)^{-s}$ with $0<s<1$.

As the aforementioned symmetry-type results on unilateral to
bilateral spectral decay have only be prevalent in few special
cases, it is from the perspective of sets of uniqueness in
Fourier analysis, more convenient to study the behaviours of
supporting sets. More specifically, we may ask whether a compact
set $E \subset \T$ supports a measure (or a distribution) with
certain unilateral Fourier decay, also supports a measure
(distribution) with the same, but now bilateral Fourier decay?
As we saw from the work G.~Kozma and A.~Olevskii, even though the
corresponding symmetry problem breaks down in the framework of
fractional Dirichlet--Sobolev spaces $\ell^{2,-\alpha}$:
\[
  \sum_{n \in \mathbb{Z}} (1+|n|)^{-\alpha} \abs{\widehat{\mu}(n)}^2 < \infty
\]
for $\alpha\in (0,1)$, one can still show that if $E$ supports a
measure with unilateral positive Fourier coefficients in
$\ell^{2,-\alpha}$, then it also supports a
probability measures with the same property, and whose Fourier
coefficients (both unilateral and bilateral) are in
$\ell^{2,-\alpha}$. On the other hand, S.~Khrushchev showed that
there exists a compact set $E \subset \T$ which supports a
measure $\mu$ with unilateral Fourier coefficients in
$\ell^{2,1}$, but does not support a non-trivial
measure with bilateral Fourier coefficients in
$\ell^{2,1}$. These results depend on the specific
potential theoretical framework on $\ell^{2,1}$, and using
further refinements of these techniques, N.~G.~Makarov
\cite{makarov1991class} proved similar results for
$\ell^{2,\alpha}$ with $0<\alpha<1$. In similar fashion, it was
recently proved by Kozma and Olevskiǐ \cite{kozma2003null} that
there exists a non-trivial distribution $S$ supported on a set of
Lebesgue measure zero, such that
$\sum_{n>0} |\widehat{S}(n)|^2 < \infty$, but clearly
$\{\widehat{S}(n)\}_n \notin \ell^2$ in view of
Parseval's Theorem. In the setting of $\ell^q$, for $q>2$, N.~Lev
and A.~Olevskii \cite{lev2011wiener} constructed compact sets
supporting distributions with $\ell^p$-summable Fourier
coefficients but no such measures, a development that ultimately
led to them disproving Wiener's conjecture on describing cyclic
elements only by zero sets.

Our principal contribution is to study the corresponding problem
in the setting of $\ell^{q}$ for $q<2$.

\subsection{Uniqueness sets and Fourier capacity}

In addition to the lack symmetry of supporting sets for
unilateral and bilateral Fourier decay, we develop a structural
characterization of $\ell^q$-uniqueness sets via a
natural Fourier capacity. For a set $E \subset \mathbb{T}$, we
let $M(E)$ denote the set of complex Borel measures of finite
total variation and support in $E$.  Let $A_p(\T)$ denote the
space of distributions whose Fourier coefficients lie in
$\ell^p $, equipped with the norm
\[
\lVert f \rVert_{A_p (\mathbb{T})} = \lVert \{\widehat{f}(n)\}_{n\in \mathbb{Z}}
\rVert_{\ell^p}.
\]
For $1<q<2$ and $p=q/(q-1)$,
define the notion of $\ell^p$-capacity
\[
\Caplp (E)
=
\inf_{\substack{\phi=1 \text{ on } E\\ \phi\in C(\T)}}
\|\phi\|_{A_p (\mathbb{T})}.
\]

\begin{thm}\thlabel{THM:CHARlq}
  Let $1<q<2$ and $p=q/(q-1)$.  A compact set $E\subset\T$
  supports no nontrivial measure $\mu$ with
  $\{\widehat\mu(n)\}_{n\in \mathbb{Z}} \in \ell^q $ if and only if\/
  $\Caplp (E)=0$.  Moreover,
  \begin{equation}
    \label{eq:capacityformula}
    \Caplp (E)
    =
    \sup_{\substack{\nu\in M(E)\\ \|\nu\|_{A_q}\le1}}
    |\nu(E)|.
  \end{equation}
\end{thm}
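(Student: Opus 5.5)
The plan is to establish the duality formula \eqref{eq:capacityformula} by Hahn--Banach and then read off the characterization from it. We work in the pairing between $A_p(\T)$ and $A_q(\T)$, $\langle \phi,\nu\rangle=\sum_n \widehat\phi(n)\overline{\widehat\nu(-n)}$ up to conjugation conventions. Since $1<q<2<p$, we have $A_q\subset A_2=L^2$, so every $\nu\in M(E)$ with $\|\nu\|_{A_q}<\infty$ is an $L^2$ function supported on $E$.

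\emph{Easy inequality.} Take $\phi\in C(\T)$ with $\phi=1$ on $E$ and $\|\phi\|_{A_p}<\infty$, and $\nu\in M(E)$ with $\|\nu\|_{A_q}\le 1$. Then
\[
\nu(E)=\int\phi\,d\nu=\sum_n\widehat\phi(n)\widehat\nu(-n).
\]
This identity holds because $\nu\in L^2$ and $\phi\in L^2$, so Parseval applies directly. Hölder then gives $|\nu(E)|\le\|\phi\|_{A_p}$, hence $\sup\le\Caplp(E)$.

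\emph{Reverse inequality.} Let $X$ be the closure in $A_p$ of $C(\T)\cap A_p$; this is a closed subspace of $A_p\cong\ell^p$, and it contains the trigonometric polynomials. Let $K\subset X$ be the closure of the convex set $\{\phi\in C(\T)\cap A_p:\phi=1\text{ on }E\}$. Set $d=\Caplp(E)=\operatorname{dist}(0,K)$; we may assume $d>0$. Hahn--Banach then gives a functional $\Lambda$ on $X$ with $\|\Lambda\|\le 1$ and $\operatorname{Re}\Lambda\ge d$ on $K$. Extending $\Lambda$ to $\ell^p$ and using $(\ell^p)^*=\ell^q$, it is represented by some $\nu$ with $\widehat\nu\in\ell^q$ and $\|\nu\|_{A_q}\le1$.

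We then need $\nu$ to be supported on $E$. Take $\psi$ a smooth function with support disjoint from $E$. For $\phi\in K$ (with $\phi$ continuous and $=1$ on $E$) and any $t\in\mathbb{C}$, the function $\phi+t\psi$ again lies in $K$, because smooth functions belong to $A_p$. Hence $\operatorname{Re}\Lambda(\phi)+\operatorname{Re}\,t\Lambda(\psi)\ge d$ for all $t$, which forces $\Lambda(\psi)=0$. So $\nu$, an $L^2$ function, vanishes off $E$, i.e.\ $\nu\in M(E)$.

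Finally, to compute $\nu(E)$, choose smooth $\phi_k$ equal to $1$ on $E$ with $\phi_k\downarrow 1_E$ pointwise (supported in shrinking neighbourhoods). Each $\phi_k\in K$, so $\operatorname{Re}\Lambda(\phi_k)\ge d$. Moreover $\Lambda(\phi_k)=\int\phi_k\,\overline{\nu}\,dm\to\int_E\overline\nu\,dm$ by dominated convergence, since $\nu\in L^1$; the conjugation is only a convention. Thus $|\nu(E)|\ge d$, proving \eqref{eq:capacityformula}.

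\emph{Characterization.} If $E$ supports a nontrivial $\mu$ with $\widehat\mu\in\ell^q$, then $\mu\in L^2$, and we may multiply by a trigonometric polynomial $P$ so that $\int P\,d\mu\neq0$; this is possible since $\mu\neq0$ means some Fourier coefficient is nonzero. The measure $\nu=P\mu$ is still supported on $E$, and $\widehat\nu\in\ell^q$ because it is a finite combination of shifts of $\widehat\mu$. Normalizing $\nu$ and applying \eqref{eq:capacityformula} gives $\Caplp(E)>0$. Conversely, if $\Caplp(E)>0$, the supremum in \eqref{eq:capacityformula} produces $\nu\in M(E)$ with $\widehat\nu\in\ell^q$ and $\nu(E)\ne0$, so $\nu$ is nontrivial.

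\emph{Main obstacle.} The step I expect to be delicate is the support argument in the Hahn--Banach part: we must know that $\nu$ annihilates every test function vanishing near $E$, and also that smooth functions are dense enough to identify $\nu$ as a function. Both rest on $\nu\in L^2$ (valid since $q<2$) and on smooth functions lying in $A_p$. Handling the conventions when extending from $X$ to $\ell^p$ also needs care, but it is routine.
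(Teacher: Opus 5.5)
Your proof is correct and is essentially the paper's argument. The paper writes $\Caplp(E)$ as the quotient norm of $1$ in $A_p(\T)/X$, where $X$ is the closure of the continuous functions vanishing on $E$, and identifies $(A_p/X)'$ with $\{g\in A_q(\T): g=0 \text{ a.e. off } E\}$; your convex separation, followed by showing that $\Lambda$ annihilates every smooth $\psi$ supported off $E$, establishes exactly this, and you fill in details the paper leaves implicit. The only reorganization is that you read off the ``if and only if'' from the formula by modulating $\mu$ by $\overline{\zeta}^{\,n}$, whereas the paper gets it from \thref{PROP:CHARlq}; your dominated-convergence step is also unnecessary, since once $\nu$ vanishes off $E$ the constant $1\in K$ already gives $\operatorname{Re}\Lambda(1)=\operatorname{Re}\overline{\nu(E)}\ge d$.
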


While related mechanisms appear implicitly in earlier
constructions, the capacity formulation clarifies the metric
structure of uniqueness sets in $\ell^q $. Combining
\thref{THM:MAIN} and \thref{THM:CHARlq}, we obtain compact sets
of arbitrarily large measure that have finite Beurling--Carleson
entropy and yet zero $\ell^p$-capacity for all $p>2$.
From the works of Katznelson, Newman, et al., it is possible to
get the impression that uniqueness sets for $\ell^q$
must necessarily have infinity Beurling--Carleson entropy.  We
show that this is not the case.

A further consequence is a discrepancy between simultaneous
approximation by trigonometric and analytic polynomials.

\begin{cor}
  \thlabel{COR:SAlq}
  For any $p>2$, there exists compact sets $E \subset \T$ of
  Lebesgue measure arbitrary close to full, which satisfy the
  following properties:
  \begin{enumerate}
  \item[(a)] For any $f\in A_p(\T)$ and any continuous function
    $g$ on $\T$, there exists trigonometric polynomials
    $\{T_N\}_N$ with the properties that
    \[
      \sum_{n \in \mathbb{Z}} |\widehat{T}_N(n) -\widehat{f}(n)|^p \to 0, \qquad
      \sup_E \abs{T_N -g} \to 0.
    \]
  \item[(b)] Whenever $\{Q_N\}_N$ are analytic polynomials which
    satisfy the properties:
    \[
      \sum_{n \in \mathbb{Z}} \, |\widehat{Q}_N(n)-\widehat{f}(n)|^p \to 0, \qquad
      \sup_{E} \abs{Q_N} \to 0,
    \]
    for some $f\in H^2$, then $f$ is identically zero.
\end{enumerate}
\end{cor}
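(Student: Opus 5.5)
Take $E$ to be the set from \thref{THM:MAIN}, with $q=p/(p-1)\in(1,2)$. Part (i) of \thref{THM:MAIN} says $E$ supports no nontrivial measure with $\ell^q$ coefficients. By \thref{THM:CHARlq} this is equivalent to $\Caplp(E)=0$. Both parts of the corollary are then proved by Hahn--Banach duality, using the pairing between $A_p(\T)$ and $A_q(\T)$ via Fourier coefficients.

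For (a), I would work in the Banach space $X=A_p(\T)\oplus C(E)$ with norm $\max(\|\cdot\|_{A_p},\|\cdot\|_{C(E)})$. The goal is to show that the set $\{(T,T|_E)\}$, with $T$ a trigonometric polynomial, is dense in $X$. Suppose a bounded functional annihilates this set. It is represented by a pair $(\lambda,\nu)$ with $\lambda\in\ell^q$ (dual of $A_p$, since $1<q<2$) and $\nu\in M(E)$, and
\[
\sum_n \widehat T(n)\lambda(n)+\int_E T\,d\nu=0
\]
for all trigonometric polynomials $T$. Testing on $T=e^{2\pi i n x}$ gives $\lambda(n)=-\widehat\nu(-n)$. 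Hence $\{\widehat\nu(n)\}\in\ell^q$ with $\nu\in M(E)$, so $\nu=0$ by (i) of \thref{THM:MAIN}, and then $\lambda=0$. Density follows, and a sequence $T_N\to(f,g)$ in $X$ gives (a). This step is routine; the only points to check are that trigonometric polynomials are dense in $A_p$ (true since $p<\infty$) and that the dual of $C(E)$ is $M(E)$.

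For (b), I would use (ii) of \thref{THM:MAIN}. There we have $h\in L^\infty$ supported in $E$ with $|\widehat h(n)|\le C(M)n^{-M}$ for $n>0$. Given $Q_N$ and $f$ as in (b), I would pair against $h e^{-2\pi i k x}$ for each fixed $k\ge 0$:
\[
\int Q_N\,\overline{h}\,e^{2\pi i kx}\,dm=\sum_{n\ge0}\widehat Q_N(n)\,\overline{\widehat h(n+k)}.
\]
The left side is at most $\|h\|_\infty\sup_E|Q_N|\to0$. On the right, $\{\widehat h(n+k)\}_{n\ge0}\in\ell^q$ by the rapid decay, so the right side converges to $\sum_{n\ge0}\widehat f(n)\overline{\widehat h(n+k)}=\int f\,\overline{h}\,e^{2\pi ikx}\,dm$. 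Thus $\widehat{f\overline h}(-k)=0$ for all $k\ge0$. Here $f\in H^2$ and $\overline h\in L^\infty$. The condition says $\overline{f\overline h}=\overline f h$ lies in $H^2_0$, i.e. $\overline f h\in \overline{H^2}\cdot L^\infty$ has vanishing nonpositive coefficients.

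The main obstacle is turning this into $f\equiv0$. A single $h$ only gives one relation, so I would use the whole family $h\,\overline{P}$ with $P$ analytic polynomials, and I would choose $h$ from the construction of \thref{THM:MAIN}(ii) as $h=\overline{G}$ with $G$ in $H^\infty$ and $G\ne0$ (or arrange this). Then $f h\,e^{-2\pi ikx}$ having vanishing integrals for all $k\ge0$ would give $fG\equiv0$ on $\T$ in a suitable sense. Because $f,G\in H^2$ are nonzero on sets of full measure unless identically zero, this forces $f\equiv0$. If $h$ cannot be taken anti-analytic, I would instead use that $E$ carries the functions $h\,\overline{P}$, which still have rapidly decaying positive coefficients, together with $f$ vanishing a.e. on $E$ (take $h$ to be any bounded function on $E$). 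I would then invoke the log-integrability of $H^2$ functions, since $|E|>0$. Pinning down exactly which property of $h$ from the construction to exploit is the delicate part.
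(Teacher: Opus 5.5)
Your part (a) is correct. The Hahn--Banach argument on $A_p(\T)\oplus C(E)$ is a direct proof of the implication (a)$\Rightarrow$(c) of \thref{PROP:CHARlq}, which is exactly what the paper invokes. Your pairing computation in (b) is also the paper's, and it correctly gives $\widehat{f\overline{h}}(-k)=0$ for all $k\ge 0$. The gap is that you never finish (b), and the completions you sketch do not work:
\begin{itemize}
\item Taking $h=\overline{G}$ with $G\in H^\infty$, $G\not\equiv 0$, is impossible. The function $h$ vanishes on $\T\setminus E$, which is a nonempty open set (by (i), $E\neq\T$) and so has positive measure. A nonzero $H^\infty$ function cannot vanish on such a set.
\item Taking ``$h$ to be any bounded function on $E$'' breaks the limit passage. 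Since $\widehat{Q}_N\to\widehat f$ only in $\ell^p$ with $p>2$, you need $\{\widehat h(n)\}_{n\ge0}\in\ell^q$. That is exactly where the unilateral decay from \thref{THM:MAIN}(ii) is needed.
\item ``$f=0$ a.e.\ on $E$'' is neither proved nor needed.
\item A single $h$ does not give ``one relation''. It gives one for every $k\ge 0$, and that is already enough.
\end{itemize}

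The missing step is short. Since $f\in H^2$ and $h\in L^\infty$, the function $F:=f\overline{h}$ lies in $L^2$. Your identity says all Fourier coefficients of $F$ with index $\le 0$ vanish, so $F\in H^2$. Note it is $f\overline{h}$, not $\overline{f}h$, that lies in $H^2_0$. Now $F=0$ a.e.\ on $\T\setminus E$, a set of positive measure, so $F\equiv 0$ by the uniqueness theorem for Hardy functions ($\log|F|\in L^1(\T)$ whenever $0\not\equiv F\in H^2$). Hence $f=0$ a.e.\ on $\{h\neq 0\}$, which has positive measure because $h\not\equiv 0$. Applying the same uniqueness theorem to $f\in H^2$ gives $f\equiv 0$. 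This is exactly how the paper concludes, with $h$ supplied by Khrushchev's theorem (equivalently by \thref{THM:MAIN}(ii)).
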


Here, $H^p$ denotes the classical Hardy spaces, for $p\geq 1$. Each Hardy space $H^p$ may be isometrically and isomorphically identified with the closed subspace of $L^p (\T, dm)$ consisting of elements with $\widehat{f} (n) = 0$ for $n < 0$.

Here (a) is a simultaneous approximation phenomenon for
trigonometric polynomials, whereas (b) is a Khinchin--Ostrowski
type rigidity statement on analytic functions, see Havin and
J\"{o}ricke \cite[Chapter 3, §2]{havinbook} and the further work of Khrushchev surveyed
therein. Related simultaneous approximation phenomenons for
$\ell^q$ in the range $1<q<2$ were considered already
by Kahane--Katznelson \cite{kahane1971comportement}, involving
intricate probabilistic constructions. Based on those techniques,
analytic variants appeared in Kahane--Nestoridis
\cite{kahane2000series}. A different construction involving inner
functions was recently announced by A. Limani
\cite{limani2025fourier}.

\subsection{Method and organization}

The construction underlying \thref{THM:MAIN} combines three
principal ingredients.  First, we introduce explicit frequency
blocks whose arithmetic separation forces any supported measure
to accumulate $\ell^q $-mass.  Second, we maintain
quantitative control of the complementary arcs, thereby
preserving finite Beurling--Carleson entropy and enabling the use
of unilateral decay results of Khrushchev type.  Third, a duality
argument identifies $\ell^q$-uniqueness with
vanishing Fourier capacity, clarifying the metric structure
underlying the phenomenon.

Section~2 develops the Beurling--Carleson entropy framework and recalls the necessary unilateral decay results. 
Section~3 begins with the Fourier capacity characterization and its connection to simultaneous approximation; the remainder of the section is devoted to the construction of the exceptional sets and the proof of \thref{THM:MAIN}.

Section~4 contains our constructions of non-uniqueness sets, and uniqueness sets of uniform type, which combined prove \thref{THM:MAIN2}.

In Section~5 we exhibit non-periodic $L^q(\R)$-analogues of our main results.


\section{Entropy and unilateral Fourier decay}

\subsection{Unilateral polynomial decay and Beurling--Carleson entropy}

We recall the entropy condition governing the existence of measures with smooth Cauchy transforms. 
This goes back to the pioneering work of S.~Khrushchev, who established the following fundamental characterization.

\begin{thm}[Khrushchev, 1973 {\cite{khrushchev1978problem}}] \thlabel{THM:KHRUSHMAIN}
Let $E\subsetneq \T$ be compact. The following are equivalent:
\begin{enumerate}
\item[(a)] There exists a nontrivial finite complex Borel measure $\mu$ supported on $E$ whose Cauchy transform
\[
\mathcal K(\mu)(z)
=
\int_E \frac{d\mu(\zeta)}{1-\overline{\zeta}z},
\qquad |z|<1,
\]
extends to a $C^\infty$-function on $\T$.
\item[(b)] $E$ contains a compact subset $E_0$ of positive Lebesgue measure, which has finite Beurling--Carleson entropy:
\[
\mathcal E(E_0)
:=
\sum_j |I_j| \log\frac{1}{|I_j|}
<\infty,
\]
where $\{I_j\}$ are the connected components of\/ $\T\setminus E_0$.
\end{enumerate}
\end{thm}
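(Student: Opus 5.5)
Since Theorem~\thref{THM:KHRUSHMAIN} is quoted from \cite{khrushchev1978problem}, the paper itself does not re-prove it; what follows is only my sketch of how I would prove it, with the hardest step still open. The guiding identity is the splitting of a measure into its analytic and anti-analytic parts. For $\mu\in M(\T)$ we have, in the sense of distributions on $\T$,
\[
\mu=\mathcal K(\mu)+\overline{\zeta\, h_\mu},
\qquad
h_\mu(z)=\sum_{n\ge 1}\overline{\widehat\mu(-n)}\,z^{n-1}.
\]
Thus $\mathcal K(\mu)\in C^\infty(\T)$ means exactly that $\mu-\overline{\zeta h_\mu}$ is a $C^\infty$ function. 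Both implications will be read off this identity.

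\emph{(b)$\Rightarrow$(a).} I would first invoke the Carleson/Taylor--Williams construction. Because $\mathcal E(E_0)<\infty$, there is an outer function $F$ in the disc algebra, $C^\infty$ up to $\T$, whose zero set on $\T$ is exactly $E_0$. Its modulus is prescribed as $\exp(-w)$, where $w$ blows up near each arc $I_j$ like a power of $\log(1/\operatorname{dist}(\cdot,\partial I_j))$; finite entropy is precisely what makes $\log|F|$ integrable. Every derivative of $F$ then vanishes on $E_0$.

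The candidate is a density of the form $g=\mathbbm 1_{A}\,\overline{\zeta G}$, with $G$ analytic and smooth up to $\T$. For this $g$,
\[
g-\overline{\zeta G}=-\mathbbm 1_{\T\setminus A}\,\overline{\zeta G},
\]
and $\overline{\zeta G}$ has only negative frequencies. Hence $\mathcal K(g\,dm)$ is smooth as soon as $\mathbbm 1_{\T\setminus A}\,G$ is $C^\infty$. The task is therefore to place the flat zeros of $G$ on the boundary of the support of $g$, while keeping $G\neq 0$ on a set of positive measure inside $E_0\subset E$.

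\emph{Main obstacle for this direction.} The naive choice $G=F$ fails, because $F$ vanishes on all of $E_0$. I would instead pass to a compact subset $E_1\subset E_0$ of positive measure and finite entropy, built so that $E_0\setminus E_1$ is a union of small, well-separated open arcs $J_k$ with $\sum_k|J_k|\log(1/|J_k|)<\infty$. I would then take $G$ outer, flat on $\T\setminus\bigcup_k J_k$ and nonzero on each $J_k$, and set $A=\bigcup_k \overline{J_k}\subset E$.

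Where this is uncertain: arranging that the gluing $\mathbbm 1_{\T\setminus A}G$ is $C^\infty$ across infinitely many endpoints with uniform bounds on derivatives is the step I expect to be hardest. My first attempt would be to control $\partial^k$ of the outer factor by the entropy sum, via the standard estimates for $\log|G|$ near an arc. Failing that, I would fall back on Khrushchev's duality argument: show that the functional $\phi\mapsto\int\phi\,d\mu$ cannot annihilate the required space of smooth analytic functions.

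\emph{(a)$\Rightarrow$(b).} Let $\mu\neq0$ be supported on $E$ with $\mathcal K(\mu)$ smooth, and let $\Phi$ be the smooth function equal to $\mathcal K(\mu)$ on $\T$. On the open set $\T\setminus E$ the identity gives $\overline{\zeta h_\mu}=-\Phi$, so $h_\mu$ extends smoothly from the disc across $\T\setminus E$.

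Next I would show $h_\mu\not\equiv0$. If $h_\mu\equiv 0$, then $\mu=\Phi$ is a smooth function vanishing off $E$ whose negative frequencies all vanish. It would then lie in $H^1$ and vanish on the set $\T\setminus E$, which has positive measure because $E\ne\T$. By the F.\ and M.\ Riesz uniqueness theorem $\mu\equiv0$, a contradiction.

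Having $h_\mu\not\equiv 0$, I would take a subset $E_0=\{\zeta\in E:|h_\mu(\zeta)|\ge\delta\}$ (after regularising), which has positive measure because $\log|h_\mu|\in L^1$. For each complementary arc $I$ of $E$, the function $h_\mu$ is flat at the endpoints of $I$ to the extent that $\Phi$ is. A Jensen-type estimate on a Carleson box over $I$ then gives $\int_I\log|h_\mu|\lesssim -|I|\log(1/|I|)$ for the arcs meeting $E_0$'s neighbourhood. Summing over arcs and using $\int\log|h_\mu|>-\infty$ yields $\mathcal E(E_0)<\infty$.
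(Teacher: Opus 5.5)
\textbf{Both directions have genuine gaps.} The paper only quotes this theorem and only uses (b)$\Rightarrow$(a), so your sketch is judged on its own.

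\textbf{(b)$\Rightarrow$(a).} Your reduction is correct. Write $P_+$ for the projection onto frequencies $n\ge 0$. If $G\in H^\infty$ and $g=\mathbbm{1}_A\overline{\zeta G}$, then $\mathcal K(g\,dm)=-P_+\bigl(\mathbbm{1}_{\T\setminus A}\overline{\zeta G}\bigr)$, so $\mathbbm{1}_{\T\setminus A}G\in C^\infty(\T)$ suffices.

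What breaks the argument is your extra demand that $G$ be smooth up to $\T$.
\begin{itemize}
\item A nonzero $H^\infty$ function cannot vanish on a set of positive measure. So there is no outer $F$ with zero set $E_0$ when $m(E_0)>0$; Carleson and Taylor--Williams treat null sets.
\item Your repair fails for the same reason. The set $\T\setminus\bigcup_k J_k$ contains the nonempty open set $\T\setminus E$, so a $G$ flat there is $\equiv 0$. Also, $E_0\setminus E_1$ can be a union of open arcs only if $E_0$ has interior.
\item More generally, suppose $E$ has empty interior, which is the relevant case (the sets built in this paper are of this kind). Then for every $A\subset E$, continuity of both $G$ and $\mathbbm{1}_{\T\setminus A}G$ forces $G=0$ on $A$, hence $g=0$.
\end{itemize}

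The fix is to let $G$ jump. Take $A=E_0$ and let $G$ be the bounded outer function with $|G|=1$ on $E_0$ and $|G|=e^{-w}$ on each complementary arc $I_j$. Here $w$ is smooth on $I_j$, grows faster than $\log(1/\operatorname{dist}(\cdot,\partial I_j))$ at the endpoints, and still satisfies $\int w\,dm<\infty$; this is exactly what $\mathcal E(E_0)<\infty$ permits. Taylor--Williams type derivative estimates for outer functions give $|G^{(k)}(\zeta)|\le C_{k,N}\operatorname{dist}(\zeta,E_0)^N$ on $\T\setminus E_0$. Hence $\mathbbm{1}_{\T\setminus E_0}G\in C^\infty(\T)$, even though $G$ itself jumps at $\partial E_0$. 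Then $g=\mathbbm{1}_{E_0}\overline{\zeta G}$ is bounded, has $|g|=1$ on $E_0$, and has smooth Cauchy transform.

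\textbf{(a)$\Rightarrow$(b).} The decisive bound $\int_I\log|h_\mu|\lesssim-|I|\log(1/|I|)$ has no basis. On a complementary arc $I$ of $E$ you have $|h_\mu|=|\Phi|$, and nothing forces the smooth function $\Phi$ to vanish at $\partial I$, let alone be flat there.

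In fact no per-arc bound of this kind can hold. Let $\mu$ satisfy (a) with support in a nowhere dense Beurling--Carleson set $K$ of positive measure. Let $E$ be $K$ together with finitely many points in each complementary arc $I_j$ of $K$, cutting $I_j$ into $n_j\ge e^{1/|I_j|}$ equal pieces; $E$ is compact since $|I_j|\to0$. Then $\mu$, $\Phi$ and $h_\mu$ are unchanged. Your bound, summed over the pieces, gives $\int_{I_j}\log|h_\mu|\lesssim -|I_j|\log(n_j/|I_j|)\le -1$ for every $j$. Infinitely many $I_j$ lie in any neighbourhood of a density point of $K$, so this contradicts $\log|h_\mu|\in L^1$.

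Your bookkeeping is also off. $\mathcal E(E_0)$ involves the complementary arcs of $E_0=\{|h_\mu|\ge\delta\}\cap E$, which contain pieces of $E$; it is not a sum over the arcs of $E$.

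A log-integrability argument would need a nonzero $H^1$ function that is $O(\operatorname{dist}(\cdot,E_0))$ on $\T\setminus E_0$, for some compact $E_0\subset E$ of positive measure. $h_\mu$ is not such a function, and your sketch gives no way to produce one. So this direction remains unproved.
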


Here $|I|$ denotes the length of an arc $I\subset\T$. A crucial
but simple remark is that $C^\infty$-regularity of the Cauchy
transform in (a) is equivalent to super-polynomial decay
of the positive Fourier coefficients of $\mu$, via the formula
\[
\mathcal \Ka(\mu)(z)=\sum_{n\ge0}\widehat\mu(n)z^n,
\qquad |z|<1.
\]
This formula shows that $\Ka(\mu)$ extends to a
$C^\infty$-function on $\T$, if and only if $\widehat{\mu} (n)$
decays faster than any polynomial as $n \to \infty$. In other
words, the Beurling--Carleson entropy of a subset determines the
capacity for the Cauchy integral to have smooth extensions to
$\T$.

In recent work by A. Limani and B. Malman
\cite{limani2021constructions}, it was proved that if $E$ has
finite Beurling--Carleson entropy, such a measure $\mu \in M(E)$
may in fact be constructed explicitly, and one may additionally
take $d\mu=f\,dm$ with $f\in L^\infty(E)$, the set of essentially
bounded functions supported on $E$.

We close this section with a simple sufficient condition for a
set of have finite Beurling--Carleson entropy, useful for our
purposes.

\begin{lemma}\thlabel{LEM:BC}
  Let
  \[
    E = \bigcap_j (\T\setminus U_j),
  \]
  be a compact subset of\/ $\T$, where each $U_j=\bigcup_k I_k(j)$
  is a finite union of open arcs. If
  \[
    \sum_{j,k} |I_k(j)| \log \frac{1}{|I_k(j)|}<\infty,
  \]
  then $E$ has finite Beurling--Carleson entropy.
\end{lemma}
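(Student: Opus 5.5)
The plan is to compare the complementary arcs of $E$ directly with the arcs $I_k(j)$ and to use the monotonicity of $t\mapsto t\log(1/t)$. The complement is
\[
\T\setminus E=\bigcup_j U_j=\bigcup_{j,k} I_k(j),
\]
an open set, and its connected components $\{J_m\}$ are open arcs. Each $J_m$ is a union of some of the arcs $I_k(j)$, since every $I_k(j)$ is connected and therefore lies inside a single component. The claim then reduces to the estimate
\[
\sum_m |J_m|\log\frac{1}{|J_m|}\le \sum_{j,k}|I_k(j)|\log\frac{1}{|I_k(j)|}.
\]
We may assume $E$ is nonempty. If $\T\setminus E$ were a single full circle, the statement would be trivial or vacuous, so each $J_m$ is a proper arc with $|J_m|\le 1$.

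The key step is a per-component inequality. Write $\mathcal F_m=\{I_k(j): I_k(j)\subset J_m\}$. The function $\varphi(t)=t\log(1/t)$ is subadditive on $[0,1]$: it is concave with $\varphi(0)=0$, hence $\varphi(a+b)\le\varphi(a)+\varphi(b)$. A finite or countable cover extends this: if $J\subset\bigcup_i I_i$, then $\varphi(|J|)\le\sum_i\varphi(|I_i|)$.

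There are two concrete ways to establish this extension. The first works when $\sum_i|I_i|\le 1$; then one uses that $\log(1/|I_i|)\ge\log(1/\sum_l|I_l|)$. The second handles the general case. Here one would pick, by compactness of a slightly shrunken closed subarc $J'\subset J$, a finite subcover. Next one would reduce it to a chain of overlapping arcs covering $J'$, and then apply subadditivity together with the monotonicity of $\varphi$ on $[0,e^{-1}]$, letting $J'\uparrow J$ at the end.

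Monotonicity of $\varphi$ holds only on $[0,e^{-1}]$, so some care is needed. The hypothesis forces all but finitely many arcs $I_k(j)$ to be short, and arcs of length $\ge e^{-1}$ contribute only a bounded amount. Any component that is long has $\varphi(|J_m|)\le e^{-1}$, and only finitely many components (at most $e$) can be that long. So one may separate off finitely many long components and long arcs, which changes the sums only by finite amounts.

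Finally, each $I_k(j)$ belongs to exactly one $\mathcal F_m$. Summing over $m$ then gives finiteness of $\mathcal E(E)$.

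The main obstacle is the covering inequality $\varphi(|J|)\le\sum_i\varphi(|I_i|)$ for an arc covered by infinitely many arcs, which may overlap heavily. I would first handle finitely many arcs by induction using subadditivity. To do this, I note that for a union of two overlapping arcs, $|I\cup I'|\le|I|+|I'|$, and $\varphi$ is increasing where the lengths are small. I would then pass to the limit via the compact subarc $J'$ and continuity of $\varphi$.
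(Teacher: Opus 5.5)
Your skeleton matches the paper's: take the components $J_m$ of $\T\setminus E$, note that each $I_k(j)$ lies in exactly one $J_m$, prove a per-component inequality, then sum. The problem is the per-component inequality, which you yourself identify as the main obstacle: what you wrote does not establish it.

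\begin{itemize}
\item The general claim that $J\subset\bigcup_i I_i$ implies $\varphi(|J|)\le\sum_i\varphi(|I_i|)$ is false. One arc of length $0.99$ covers an arc of length $1/2$, yet $\varphi(0.99)<\varphi(1/2)$.
\item Your first route breaks. From $\log(1/|I_i|)\ge\log(1/S)$ with $S=\sum_l|I_l|$ you only get $\sum_i\varphi(|I_i|)\ge\varphi(S)$. The needed step $\varphi(|J|)\le\varphi(S)$ fails once $S>e^{-1}$: two arcs of length $0.45$ whose union $J$ has length $1/2$ give $S=0.9$ and $\varphi(0.9)<\varphi(1/2)$.
\item Your second route uses monotonicity of $\varphi$ at $|I_1\cup\dots\cup I_{k-1}|+|I_k|$. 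Overlaps can push this well past $e^{-1}$ even when $|J|\le e^{-1}$. For two arcs of length $0.35$ whose union has length $0.36$, we have $\varphi(0.36)>\varphi(0.7)$, so the step $\varphi(|I_1\cup I_2|)\le\varphi(|I_1|+|I_2|)$ fails. It can be repaired: pass to a minimal subcover, so no point is covered more than twice and $\sum_i|I_i|\le 2|J|$, and call a component short only when $|J_m|\le(2e)^{-1}$. That repair is not in your sketch.
\end{itemize}

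Incidentally, the hypothesis does not force all but finitely many $I_k(j)$ to be short, since lengths $1-2^{-n}$ give a summable series. This is harmless, because arcs inside a short component are automatically short.

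The missing idea is to use $I\subset J_m$ inside the logarithm rather than through $\varphi$. For $I\in\mathcal F_m$ you have $|I|\le|J_m|\le 1$, hence $\log(1/|I|)\ge\log(1/|J_m|)\ge 0$. Also $J_m=\bigcup\mathcal F_m$ gives $|J_m|\le\sum_{I\in\mathcal F_m}|I|$ by countable subadditivity of Lebesgue measure. Therefore
\[
|J_m|\log\frac{1}{|J_m|}\le\sum_{I\in\mathcal F_m}|I|\log\frac{1}{|J_m|}\le\sum_{I\in\mathcal F_m}|I|\log\frac{1}{|I|}
\]
holds for every component, long or short, and summing over $m$ finishes the proof.

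This is exactly the paper's argument. The paper writes the first step as an equality, asserting that $J_\ell$ is a disjoint union of the $I_k(j)$. Arcs from different $U_j$ can overlap, so the inequality above is what is actually needed and true. With this one-line comparison, concavity, compactness, chains, the $e^{-1}$ threshold and the separation of long components all become unnecessary.
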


\begin{proof}
  Let $\{J_\ell\}$ denote the connected components of
  $\T\setminus E$.  Since $\T\setminus E=\bigcup_{j,k} I_k(j)$,
  each $J_\ell$ is a disjoint union of arcs $I_k(j)$.  Using that
  $\log$ is increasing, we get
  \[
    \sum_\ell |J_\ell| \log \frac{1}{|J_\ell|} = \sum_\ell
    \sum_{I_k(j)\subset J_\ell} |I_k(j)| \log \frac{1}{|J_\ell|}
    \le \sum_{j,k} |I_k(j)| \log \frac{1}{|I_k(j)|}
  \]
  which proves the claim.
\end{proof}

\section{Sets of uniqueness in $\ell^q $}

\subsection{Characterizing uniqueness sets in $\ell^q$}

We begin with the proof of \thref{THM:CHARlq}, which provides a
structural characterization of uniqueness set for
$\ell^q $. The argument is self-contained and relies
only on duality and standard approximation arguments.

For $1\le q<\infty$, let $A_q(\T)$ denote the Banach space of
distributions $S$ on $\T$ whose Fourier coefficients satisfy
$\{\widehat S(n)\}_{n\in \mathbb{Z}} \in \ell^q $, equipped with the
norm
\[
  \|S\|_{A_q}:=\|\{\widehat S(n)\}_{n \in \mathbb{Z}}\|_{\ell^q
    }.
\]
When $1\le q\le2$, the space $A_q(\T)$ embeds continuously into
$L^2(\T,dm)$.

For $1<q<\infty$, the duality between $A_q(\T)$ and $A_p(\T)$, where $p=q/(q-1)$, is considered in the standard Fourier pairing
\[
\langle f,g\rangle
:=
\sum_{n \in \mathbb{Z}} \widehat f(n)\overline{\widehat g(n)}.
\]
Whenever, in addition, $fg\in L^1(\T)$, this pairing also admits the convenient integral representation
\[
\langle f,g\rangle
=
\int_{\T} f\overline g\,dm.
\]

The following proposition will serve as our foundation. 

\begin{prop}\thlabel{PROP:CHARlq}
  Let $1<q\le2$ and write $p=q/(q-1)$.  For a compact set
  $E\subset\T$ of positive Lebesgue measure, the following are
  equivalent:
  \begin{enumerate}
  \item[(a)] $E$ supports no nontrivial measure $\mu$ with
    $\{\widehat\mu(n)\}_{n \in \mathbb{Z}} \in \ell^q$.
  \item[(b)] $C(\T\setminus E)$, the set of continuous functions
    on $\T$ which vanish on $E$, is dense in
    $A_p(\T)$.
  \item[(c)] For every $f\in A_p(\T)$ and every $g\in C(E)$,
    there exist trigonometric polynomials $T_n$ such that
    \[
      \|T_n-f\|_{A_p}\to0,
      \qquad
      \sup_E |T_n-g|\to0.
    \]
  \item[(d)] $E$ has $\ell^p$-Fourier capacity zero:
    \[
      \Caplp (E)
      :=
      \inf_{\substack{\phi=1\text{ on }E \\ \phi\in C(\T) }}
      \|\phi\|_{A_p}
      =0.
    \]
  \end{enumerate}
\end{prop}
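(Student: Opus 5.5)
The whole argument rests on the Hahn--Banach theorem applied to the duality $A_p(\T)^* \cong A_q(\T)$ under the pairing $\langle\cdot,\cdot\rangle$. This duality holds because $1<q\le 2$ gives $p=q/(q-1)\in[2,\infty)$, so $\ell^p$ is reflexive with dual $\ell^q$. I would prove the cycle (a)$\Rightarrow$(b)$\Rightarrow$(d)$\Rightarrow$(a), and then attach (c) through (a)$\Leftrightarrow$(c) by a duality argument on a product space.

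\emph{(a)$\Rightarrow$(b).} Suppose $C(\T\setminus E)$ is not dense in $A_p$. By Hahn--Banach there is a nonzero $S\in A_q$ with $\langle \phi, S\rangle=0$ for every continuous $\phi$ vanishing on $E$. Since $q\le 2$, $S\in L^2\subset L^1$, so this reads $\int \phi\,\overline S\,dm=0$. Taking $\phi\in C^\infty$ supported in the open set $\T\setminus E$ shows that $S=0$ a.e.\ off $E$. Then $d\mu = S\,dm$ is a nontrivial measure on $E$ with coefficients in $\ell^q$, which contradicts (a).

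\emph{(b)$\Rightarrow$(d).} The constant $1$ lies in $A_p$. Choose $\psi_n\in C(\T\setminus E)$ with $\|\psi_n-1\|_{A_p}\to0$. Then $\phi_n=1-\psi_n$ is continuous, equals $1$ on $E$, and $\|\phi_n\|_{A_p}\to0$.

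\emph{(d)$\Rightarrow$(a).} Let $\mu\in M(E)$ have $\widehat\mu\in\ell^q$. Then $\mu=h\,dm$ with $h\in L^2$ supported on $E$. For any $\phi$ as in (d), $\int \phi\,\overline{h}\,dm$ equals both $\langle\phi,h\rangle$ and $\overline{\mu(E)}$. Hence $|\mu(E)|\le \|\phi\|_{A_p}\|h\|_{A_q}$, so $\mu(E)=0$. Applying the same argument to $e^{-ikx}\phi$ shows that every coefficient of $\mu$ vanishes, and so $\mu=0$. This step has one subtlety. We must check that $\|e^{-ikx}\phi\|_{A_p}=\|\phi\|_{A_p}$, which holds because the modulation is a shift of coefficients. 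We also need $e^{-ikx}\phi$ to equal $e^{-ikx}$ on $E$, and it does. So we get $\widehat\mu(k)=\int_E e^{-ikx}\,d\mu = \int \phi e^{-ikx}\,d\mu$, which is bounded by $\|\phi\|_{A_p}\|h\|_{A_q}\to 0$.

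\emph{(a)$\Leftrightarrow$(c).} The easy direction is (c)$\Rightarrow$(b). Given $f\in A_p$, take $g=0$. The polynomials $T_n$ tend to $0$ uniformly on $E$, but they need not vanish on $E$. To fix this, I would subtract $T_n\chi$, where $\chi\in C(\T)$ equals $1$ on $E$ and is chosen as a function that is small in $A_p$ using (d). Since (c) implies (d) directly (take $f=0$, $g=1$), such $\chi$ exist. Cleaner still is to show (c)$\Rightarrow$(a) directly. If $0\ne\mu=h\,dm\in M(E)$ with $h\in A_q$, take $f=0$ and $g$ continuous on $E$ with $\int_E g\,\overline h\,dm\neq0$. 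Then $\langle T_n,h\rangle\to0$ from the $A_p$ convergence, while $\int_E T_n\overline h\to\int_E g\overline h\ne0$. These two quantities are equal, which is a contradiction.

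For (a)$\Rightarrow$(c), consider the map $T\mapsto (T,T|_E)$ from trigonometric polynomials into $A_p\times C(E)$. If its image is not dense, Hahn--Banach gives a nonzero functional $(S,\nu)\in A_q\times M(E)$ with $\langle T,S\rangle+\int_E T\,d\bar\nu=0$ for all $T$. Comparing Fourier coefficients gives $\widehat{\bar\nu}=-\widehat{\overline{S}}$ up to conjugation conventions. So $\nu$, a measure on $E$, has coefficients in $\ell^q$, and by (a) $\nu=0$. Then $S=0$, a contradiction. The main obstacle is this last step, specifically keeping the conjugations in the pairing consistent. I expect it to resolve by conjugating $\nu$ or by working with the bilinear pairing.
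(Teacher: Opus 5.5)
Your proof is correct, and its backbone is the paper's: Hahn--Banach with $A_p(\T)'\cong A_q(\T)$. Your (a)$\Rightarrow$(b) is identical to the paper's, and your (b)$\Rightarrow$(d) is the paper's remark that $\Caplp(E)$ is the distance from $1$ to $C(\T\setminus E)$.

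The genuine difference is how you reach (c).
\begin{itemize}
\item The paper proves (b)$\Rightarrow$(c) constructively. It reduces to approximating $(1,0)$ by diagonal pairs $(T,T)$ in $A_p(\T)\oplus C(E)$, and obtains these as Fej\'er means of functions $\phi_\varepsilon\in C(\T\setminus E)$ with $\|\phi_\varepsilon-1\|_{A_p}\le\varepsilon$.
\item You prove (a)$\Rightarrow$(c) by a second Hahn--Banach argument on $A_p(\T)\times C(E)$.
\end{itemize}
Your route is shorter. It also avoids the reduction from a general pair $(f,g)$ to $(1,0)$, which the paper leaves implicit; that reduction needs invariance of both norms under multiplication by $\zeta^n$ and density of trigonometric polynomials in $C(E)$. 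The paper's route, in exchange, shows concretely that the simultaneous approximants are just smoothings of the functions witnessing $\Caplp(E)=0$. The conjugation issue you flag is harmless: testing your functional on $T=\zeta^n$ gives $\overline{\widehat S(n)}+\overline{\widehat\nu(n)}=0$, so $\widehat\nu=-\widehat S$ exactly.

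The remaining implications match the paper up to rephrasing.
\begin{itemize}
\item Your (c)$\Rightarrow$(a) is the paper's (c)$\Rightarrow$(d) argument (the pair $(\mu,-\mu)$ annihilates the diagonal), used directly rather than through (b).
\item Your (d)$\Rightarrow$(a) is the paper's computation recast as the bound $|\mu(E)|\le\|\phi\|_{A_p}\|\mu\|_{A_q}$, applied to modulations of $\mu$. This is exactly the easy half of \eqref{eq:capacityformula}, so it makes the link with the capacity formula more transparent.
\end{itemize}

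One small remark on the aside you discarded. The claim that (c) gives (d) ``directly'' with $f=0$, $g=1$ is not immediate, since $T_n$ is only close to $1$ on $E$. It can be repaired by adding a continuous extension of $(1-T_n)|_E$ with comparable sup norm, and using $\|\cdot\|_{A_p}\le\|\cdot\|_{A_2}\le\|\cdot\|_\infty$ for $p\ge2$. Your direct (c)$\Rightarrow$(a) makes this unnecessary.
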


\begin{proof}
  (a)$\Rightarrow$(b).  If $C(\T\setminus E)$ were not dense in
  $A_p(\T)$, Hahn--Banach would yield a nonzero $g\in A_q(\T)$
  annihilating $C(\T\setminus E)$.  Thus $g$ vanishes off $E$,
  and $d\mu=g\,dm$ defines a nontrivial measure supported on $E$
  with $\widehat \mu \in \ell^q$, violating the statement in (a).

\medskip
\noindent
(b)$\Rightarrow$(c).  Consider the Banach space
$A_p(\T)\oplus C(E)$ with norm
$\|(f,g)\|=\|f\|_{A_p}+\|g\|_\infty$. As the trigonometric
polynomials $T$ are dense in each component separately, it
suffices to approximate $(1,0)$ by diagonal pairs $(T,T)$. Using
the assumption that $C(\T\setminus E)$ is dense in $A_p(\T)$,
we can for every $\varepsilon>0$ find
$\phi_\varepsilon\in C(\T\setminus E)$ such that
\[
\|\phi_\varepsilon-1\|_{A_p}\le\varepsilon.
\]
Passing to appropriate Fej\'er means yields a trigonometric polynomial $T_\varepsilon$ with
\[
\|T_\varepsilon-1\|_{A_p}\le2\varepsilon,
\qquad
\sup_E |T_\varepsilon|\le\varepsilon.
\]
This proves (c).

\medskip
\noindent
(c)$\Rightarrow$(d).
Observe that
\[
\Caplp (E)
=
\inf_{\phi\in C(\T\setminus E)}
\|1-\phi\|_{A_p},
\]
so the capacity equals the distance from $1$ to $C(\T\setminus E)$ in $A_p(\T)$.

If $\Caplp (E)>0$, then $C(\T\setminus E)$ is not
dense in $A_p(\T)$.  By the proof of (a)$\Rightarrow$(b), there
exists a nontrivial $\mu\in A_q(\T)$ supported on $E$. Now the
tuple $(\mu,-\mu) \in A_q(\T) \oplus M(E)$ regarded as a
functional on $A_q(\T) \oplus C(E)$ has the property that it
annihilates the diagonal set
\[
\{(\zeta^n, \zeta^n): n=0,\pm 1, \pm 2, \ldots \}
\]
in the customary dual-pairing $\left(A_p(\T) \oplus C(E) \right)' \cong A_q(\T) \oplus M(E)$. This shows that diagonal tuples of trigonometric polynomials $(T,T)$ cannot be dense in $A_p(\T) \oplus C(E)$.

\medskip
\noindent
(d)$\Rightarrow$(a).
If $\Caplp (E)=0$, then $C(\T\setminus E)$ is dense in $A_p(\T)$. 
Let $\mu\in M(E)\cap A_q(\T)$ and let $\phi_j\in C(\T\setminus E)$ with $\phi_j\to1$ in $A_p(\T)$. 
Then for every integer $n$,
\[
 \widehat{\mu}(n) = \lim_j \sum_k \widehat{\mu}(n-k) \conj{\widehat{\phi_j}(k)} = \lim_j \int_{\T} \conj{\phi_j(\zeta)} \zeta^{-n} d\mu(\zeta) =0
\]
hence $\mu\equiv0$.
\end{proof}

We remark that one can substitute the statement in $(b)$ by the
statement
\[
(b') \quad L^2(\T \setminus E) \text{ is dense in } A_p(\T),
\]
where $L^2(\T \setminus E)$ denotes the subset of $L^2(\T,dm)$
which vanish $dm$-a.e on $E$. An analogous characterization as in
\thref{PROP:CHARlq} also holds for $q>2$, but with appropriate
modifications taking into account that $A_q(\T)$ consists of
distributions in this range.

\begin{proof}[Proof of \thref{THM:CHARlq}]
  The first part follows by Proposition~\ref{PROP:CHARlq}.
  
  It remains to establish the distance formula
  \eqref{eq:capacityformula}. Let $X$ denote the closure of
  $C(\T\setminus E)$ in $A_p(\T)$.  Then
  \[
    \Caplp (E) = \inf_{\phi \in C(\T \setminus E)}
    \norm{1-\phi}_{A_p} =: \|1\|_{A_p/X}.
  \]
  where $A_p / X$ is the quotient Banach space, equipped with the
  natural norm being the distance to $X$ in the $A_p(\T)$. By
  duality, we have
  \[
    (A_p/X)'
    \cong
    X^\perp
    =
    \{g\in A_q(\T): \supp g\subseteq E\},
  \]
  interpreted in the sense of isomorphisms of Banach spaces. This
  yields the identity
  \[
    \Caplp (E) = \sup_{\substack{\|g\|_{A_q}\le1\\
        \supp g\subseteq E}} \left|\int_{\T} g\,dm\right| =
    \sup_{\substack{\|\nu_g\|_{A_q}\le1\\ \supp{\nu_g}\subseteq
        E}} \left|\nu_g(E)\right|,
  \]
  where $d\nu_g = g dm$. Since all $\nu \in A_q \cap M(E)$ are of
  this form, the proof is complete.
\end{proof}

\medskip

We conclude with the corollary distinguishing simultaneous approximation by trigonometric and analytic polynomials.

\begin{proof}[Proof of \thref{COR:SAlq}]
  We argue using \thref{THM:MAIN}, whose proof is independent of
  the present section.  By \thref{THM:MAIN} and the
  characterization in Proposition~\ref{PROP:CHARlq}, there exist
  compact sets $E$ of arbitrarily large measure satisfying item
  (c) of Proposition~\ref{PROP:CHARlq} for all $0<q<2$. Suppose
  $\{Q_j\}$ are analytic polynomials such that
  \[
    \sum_n |\widehat Q_j(n)-\widehat f(n)|^p\to0,
    \qquad
    \sup_E |Q_j|\to0,
  \]
  with $f$ belonging to the Hardy space $H^2$. By Khrushchev's theorem
  (Theorem~\ref{THM:KHRUSHMAIN}), there exists a nontrivial
  $g\in L^\infty(E)$ whose Fourier coefficients have unilateral
  polynomial decay.  For each $k\ge0$,
  \[
    0
    =
    \lim_j \int_{\T} Q_j(\zeta)\zeta^k\overline{g(\zeta)}\,dm(\zeta)
    =
    \sum_{n\ge0}\widehat f(n+k)\overline{\widehat g(n)} = \int_{\T} f(\zeta)\zeta^k\overline{g(\zeta)}\,dm(\zeta).
  \]
  By the F. and M.~Riesz theorem, $f\overline g$ belongs to the
  Hardy space $H^1$. Since $g$ is nontrivial and supported on
  $E$, this forces $f$ to vanish a.e.\ on the support of $g$,
  hence $f\equiv0$.
\end{proof}

\subsection{The main building blocks}
\label{sec:mainblocks}

Here we shall construct the main building-block, which will be the basis of our construction. 
For any integer $l>1$, consider the family of functions $\phi_l$ on $\T$ defined by
\[
\phi_l (\zeta) := \frac{l}{2\pi}\mathbbm{1}_{I(l)} \ast \frac{l}{2\pi}\mathbbm{1}_{I(l)} \ast \ldots \ast \frac{l}{2\pi}\mathbbm{1}_{I(l)}(\zeta), \qquad \zeta \in \T,
\]
where $\mathbbm{1}_{I(l)}$ denotes the indicator function of the arc $I(l)$ centered at $\zeta=1$ of length $2\pi /l$. Note that $\phi_l:\T \to [0,\infty)$ are $C^{l-2}$-smooth and have the following Fourier-decay properties:
\[
|\widehat{\phi}_l (n)| = \abs{\frac{\sin(\pi n/l)}{\pi n/l}}^l \leq
\min \biggl( 1, \biggl( \frac{l}{\pi} \biggr)^l \abs{n}^{-l} \biggr), \qquad n\neq 0
\]
and $\widehat{\phi}_l (0)=1$. Fix $0<\delta<1$, and consider the localized function
\[
  \phi_{\delta,l}(\zeta) := 
  \left\{
  \begin{array}{ll}
    (2\pi \delta)^{-1} \phi_l(\zeta^{1/\delta}) & \text{if } \abs{\zeta-1}\leq 2\pi \delta, \\
    0 & \text{if } \abs{\zeta-1}> 2\pi \delta,
  \end{array}
  \right.
\]
which is supported in an arc of length $\delta$. Given an integer $N\geq 1$, we define 
\begin{equation}\label{EQ:phi}
\phi_{N,\delta,l}(\zeta) := \phi_{\delta,k}(\zeta^N), \qquad \zeta \in \T,
\end{equation}
whose properties we shall crucially make use of, in our construction:
    \begin{enumerate}
    \item[(a)] $\widehat{\phi}_{N,\delta,l}(0)=1$,
    \item[(b)] $\widehat{\phi}_{N,\delta,l}(n)=0$ if $N \nmid n$ and 
    \[
      |\widehat{\phi}_{N,\delta,l}(Nn)| = |\widehat{\phi}_{\delta,l} (n)|
      \leq \min \biggl( 1, \biggl(\frac{l}{\pi} \biggr)^l
      \delta^{-l} \abs{n}^{-l} \biggr), \qquad n\neq 0,
    \]
    \item[(c)] $\phi_{N,\delta,l}$ is supported in $N$ arcs, each of length $2\pi \delta/N$.
\end{enumerate}
Let $U_{N,\delta}$ denote the interior of the support of $\phi_{N,\delta}$, which is the union of $N$ open arcs of length $\delta/N$. Therefore, their length does not depend on $l$. We shall construct the desired compact subset $E$ as 
\begin{equation}\label{EQ:EDEF}
E := \bigcap_{j} \T \setminus U_{N_j,\delta_j}
\end{equation}
for a suitable choices of parameters $\{N_j\}_j$, $\{\delta_j\}_j$, and also $\{l_j\}_j$ for the associated functions $\phi_{N_j, \delta_j, l_j}$, to be determined in the process.

\subsection{Sets of uniqueness for $\ell^q $ with entropic control}
\label{sec:withentropycontrol}

This section is devoted to the proof of \thref{THM:MAIN}, which
is carried out in several steps. First, we extract uniform
$\ell^q $-mass from suitable frequency blocks
associated with the construction. Next, we shall separate these
blocks in a careful arithmetic way, which will allow us to obtain
a good estimate of the Beurling--Carleson entropy of the
resulting set. Finally, a real-variable selection argument allows
us to balance entropy against $\ell^q$-divergence.

\subsubsection{Proof of \thref{THM:MAIN}}

We construct a compact set $E$ of finite Beurling--Carleson
entropy which supports no nontrivial measure with Fourier
coefficients in $\ell^q$ for any $0<q<2$. We shall
retain the notation of \eqref{EQ:EDEF}, recalling that $E$ is
determined by parameters $\{\delta_j\}_j$, $\{N_j\}_j$, and we
write
\[
  \phi_j := \phi_{N_j,\delta_j,l_j}.
\]
where $\{l_j\}_j$ is an additional parameter, at our disposal.

\paragraph{Step 1. Extracting the main frequency masses}

Note that to ensure $m(E)\ge 1-\delta$, it suffices to impose the
simple condition
\[
\sum_j \delta_j \le \delta.
\]

Fix a non-trivial $\mu\in M(E)$, and note that upon standard
modulation and normalization, we may assume that
\[
\widehat\mu(0)=\mu(E)=1.
\]

Since $\phi_j = 0$ on $\supp\mu$, for any $j\geq1$, we have
according to the properties (a)--(b) that
\[
0
=
\int_{\T}\phi_j\,d\mu
=
1+\sum_{n\neq0}
\widehat{\phi_j}(-n)\widehat\mu(N_j n).
\]
Note that
\begin{align*}
  \sum_{0<|n|\le l_j/\delta_j}
  |\widehat\mu(N_j n)|
  &\ge
    \Big|
    \sum_{0<|n|\le l_j/\delta_j}
    \widehat{\phi_j}(-n)\widehat\mu(N_j n)
    \Big| \\
  &\ge
    1-
    \sum_{|n|>l_j/\delta_j}
    |\widehat{\phi_j}(n)|\,|\widehat\mu(N_j n)| \geq 1-
    \norm{\mu} \sum_{|n|> l_j/\delta_j}
    \abs{\widehat{\phi}_j(n)},
\end{align*}
where $\norm{\mu}$ denotes the total variation norm on $\T$ of
the measure $\mu$. Using the decay of $\phi_j$ in $(c)$, we get
\begin{equation}\label{EQ:blocktail}
  \sum_{|n|>l_j/\delta_j}
  |\widehat{\phi_j}(n)|
  \leq \biggl(\frac{l_j}{\pi}\biggr)^{l_j}  \delta_j^{-l_j}
  \sum_{|n| > l_j/\delta_j} \abs{n}^{-l_j} \leq 
  C\,\pi^{-l_j}\delta_j^{-1}.
\end{equation}
We now choose $l_j \asymp \log \frac{1}{\delta_j}$ in order to
ensure that $\pi^{-l_j}\delta_j^{-1}\to0$. This implies that
\begin{equation}\label{EQ:blockmass}
  \sum_{0<|n|\le \frac{1}{\delta_j} \log \frac{1}{\delta_j}}
  |\widehat\mu(N_j n)|
  \gtrsim 1, \qquad j=1,2,3,\ldots
\end{equation}
With this estimate at hand, we may apply Hölder’s inequality with
$1<q<2$, which yields
\begin{equation}\label{EQ:blocklq}
  \sum_{0<|n|\le \frac{1}{\delta_j} \log \frac{1}{\delta_j}}
  |\widehat\mu(N_j n)|^q
  \gtrsim \biggl(
  \frac{\delta_j}{\log \frac{1}{\delta_j}}
  \biggr)^{q-1}, \qquad j=1,2,3,\ldots
\end{equation}

\paragraph{Step 2. Disjoint frequency blocks with size control}

Now, we set
\[
M_j=\left\lceil \frac{1}{\delta_j}
  \log\frac{1}{\delta_j}\right\rceil, \qquad j=1,2,3,\ldots
\]
and introduce the frequency blocks of integers
\[
\Lambda_j := \left\{ N_j n : 0<|n|\le M_j \right\}, 
\qquad j=1,2,3,\ldots
\]
of size $|\Lambda_j|=2M_j$. If the integers $N_j$ are allowed to
grow arbitrarily, then the blocks $\Lambda_j$ can easily be made
pairwise disjoint. In view of \eqref{EQ:blocklq}, this would
yield the lower bound
\[
  \sum_{n\in \mathbb{Z}} |\widehat\mu(n)|^q
  \ge
  \sum_j\sum_{n\in\Lambda_j}
  |\widehat\mu(n)|^q
  \gtrsim
  \sum_j
  \biggl(
  \frac{\delta_j}{\log\frac{1}{\delta_j}}
  \biggr)^{q-1}.
\]
However, the magnitudes of $\{N_j\}_j$ will later play a decisive
role in the estimate of the Beurling--Carleson entropy of the
underlying set $E$. We must therefore control the size of
$\{N_j\}_j$, while still maintaining the pairwise disjointness of
the blocks $\{\Lambda_j\}_j$. The following combinatorial lemma
allows us to achieve non-overlapping frequency blocks
$\Lambda_j$, without pushing the magnitude of $N_j$, much beyond
the order of $\abs{\Lambda_j}$.

\begin{lemma}
  \thlabel{LEM:blocksep}
  There exist integers $\{N_j\}_j$ such that 
  \[
    \Lambda_j\cap\Lambda_k=\varnothing
    \quad (j\neq k),
    \qquad
    N_j \lesssim |\Lambda_j|\sum_{k=1}^j |\Lambda_k|.
  \]
\end{lemma}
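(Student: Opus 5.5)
Choose the $N_j$ inductively (greedily), with $M_j$ and hence $|\Lambda_j|=2M_j$ fixed in advance. Suppose $N_1,\dots,N_{j-1}$ have been chosen so that $\Lambda_1,\dots,\Lambda_{j-1}$ are pairwise disjoint. Write $S_{j-1}=\bigcup_{k<j}\Lambda_k$, so $|S_{j-1}|\le\sum_{k<j}|\Lambda_k|$. We must find $N$ such that $Nn\notin S_{j-1}$ for all $0<|n|\le M_j$. Then we set $N_j=N$ and obtain $\Lambda_j\cap\Lambda_k=\varnothing$ for all $k<j$.

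The key step is a counting of the \emph{bad} values of $N$. A positive integer $N$ is bad if $Nn=s$ for some $s\in S_{j-1}$ and some $0<|n|\le M_j$. For fixed $(s,n)$ there is at most one such $N$, namely $N=s/n$ when it is a positive integer. Hence the number of bad $N$ is at most
\[
|S_{j-1}|\cdot 2M_j \le |\Lambda_j|\sum_{k<j}|\Lambda_k|.
\]
One could refine this using the symmetry $s\mapsto -s$, since each $\Lambda_k$ is symmetric, but this only improves constants.

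Among the integers $1,2,\dots,|\Lambda_j|\sum_{k<j}|\Lambda_k|+1$ there is therefore at least one good $N$. Take $N_j$ to be the smallest good one. This gives
\[
N_j\le |\Lambda_j|\sum_{k<j}|\Lambda_k|+1\le |\Lambda_j|\sum_{k\le j}|\Lambda_k|,
\]
which is the claimed bound. The base case $j=1$ is trivial: take $N_1=1$, which satisfies $1\le |\Lambda_1|^2$. Disjointness from all earlier blocks holds by construction, so the family is pairwise disjoint.

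I do not expect a real obstacle. The only point needing care is the observation that each pair $(s,n)$ excludes at most one $N$, which is what yields a bound linear in $|\Lambda_j|\cdot|S_{j-1}|$ rather than something depending on the sizes of the elements of $S_{j-1}$. If the later argument needs extra properties of $N_j$, such as monotonicity $N_j>N_{j-1}$ or divisibility, the same counting still works: one restricts the search to an arithmetic progression or to a shifted range, at the cost of a constant factor or an additive term that is absorbed into $\lesssim$.
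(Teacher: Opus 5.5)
Your proposal is correct and follows essentially the same argument as the paper. Both choose the $N_j$ inductively, bound the number of bad $N$ by $|\Lambda_j|\,|S_{j-1}|$ (each pair $(s,n)$ excludes at most one $N$), and apply pigeonhole to take the smallest good $N_j \le 1+|\Lambda_j|\sum_{k<j}|\Lambda_k|$.
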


\begin{proof}
  Write $|\Lambda_j|=2M_j$ and construct $(N_j)$ inductively.
  Assume $N_1<\ldots<N_k$ have been chosen so that
  $\Lambda_1,\ldots,\Lambda_k$ are pairwise disjoint. Define the
  sets
  \[
    S_k:=\bigcup_{j=1}^k \Lambda_j, \qquad \Lambda_{k+1}(N):=
    \{nN: 0< |n| \leq M_{k+1}, \ n \in \mathbb{Z} \},
  \]
  where $N\geq 1$ is an integer. Note that
  \begin{equation}\label{eq:disj}
    \Lambda_{k+1}(N)\cap S_k \neq \varnothing .
  \end{equation}
  if and only if there exist an integer $m\in S_k$ and
  $1\le |n| \le M_{k+1}$ such that
  \[
    nN=m.
  \]

  There are $|\Lambda_{k+1}| = 2 M_{k+1}$ possible choices of
  $n$. For each such $n$, there are at most
  $|S_k| = \sum_{j=1}^k |\Lambda_j|$ values of $N$ for which
  $nN = m \in S_k$. Hence, in total there are not more than
  \[
    |\Lambda_{k+1}| |S_k| =|\Lambda_{k+1}|\sum_{j=1}^k|\Lambda_j|
  \]
  different values of $N$ for which
  $\Lambda_{k+1}(N)\cap S_k \neq \varnothing$.
  By the pigeon-hole principle, there must be a positive integer
  \[
    1\leq N \leq 1+|\Lambda_{k+1}|\sum_{j=1}^k|\Lambda_j|
  \]
  which fails the aforementioned property. Choosing $N_{k+1}$ to
  be the smallest such integer yields \eqref{eq:disj} and
  therefore preserves pairwise disjointness. Furthermore, we also
  have by construction that
  \[
    N_{k+1}\le 1+|\Lambda_{k+1}| \sum_{j=1}^k|\Lambda_j| \asymp
    |\Lambda_{k+1}|\sum_{j=1}^k|\Lambda_j|,
  \]
  which gives the required upper bound. The claim follows by
  induction.
\end{proof}
We remark that the integers $N_k$ may actually be chosen somewhat smaller. Indeed, let $N_k$ be the $k$-th prime exceeding $M_{k-1}$. Since $\{M_k\}_k$ are non-decreasing, this ensures that the prime numbers $\{N_k\}_k$ are distinct. Now let $j<k$ and note that $\Lambda_j \cap \Lambda_k \neq \varnothing$ if and only if
\[
nN_k=mN_j,
\qquad
1\le |n|\le M_k,\quad 1\le |m|\le M_j.
\]
But then $N_k\mid mN_j$, and since $N_k\neq N_j$ are prime, it follows that $N_k\mid m$, which is impossible due to
\[
|m|\le M_j\le M_{k-1}<N_k.
\]
Therefore $\Lambda_j\cap\Lambda_k=\varnothing$ whenever $j\neq k$, and by the prime number theorem, this choice satisfies
\[
N_k \lesssim (M_{k-1}+k)\log(M_{k-1}+k).
\]
However, we shall not use this refinement.

\paragraph{Step 3. Entropy control}

According to \thref{LEM:blocksep}, we can choose
\[
N_j \asymp M_j \sum_{k=1}^{j-1} M_k,
\]
hence upon invoking Lemma~\ref{LEM:BC}, we retrieve the entropic estimate
\[
  \mathcal E(E) \lesssim \sum_j \delta_j\log\frac{N_j}{\delta_j}
  \asymp \sum_j \delta_j\log \biggl( \sum_{k=1}^j
  \frac{1}{\delta_k} \log \frac{1}{\delta_k} \biggr).
\]
Therefore, in order to ensure finite Beurling--Carleson entropy
and divergence of the $\ell^q$-norm of
$\{\widehat{\mu}(n)\}_n$, simultaneously, it suffices to
construct decreasing positive numbers $\{\delta_j\}$ such that
\begin{equation}\label{EQ:balance}
  \sum_j
  \delta_j\log \biggl( \sum_{k=1}^j \frac{1}{\delta_k} \log
  \frac{1}{\delta_k} \biggr)
  <\infty,
  \qquad
  \sum_j
  \biggl(
  \frac{\delta_j}{\log\frac{1}{\delta_j}}
  \biggr)^{q-1}
  =\infty \quad \text{for every }1<q<2,
\end{equation}
For instance, this is achieved by the explicit choice
\[
  \delta_j = \frac{c}{j (\log j)^a}, \qquad j=1,2,3,\dots
\]
with $a > 2$, and $c>0$ small enough so that
$\sum_j \delta_j \leq \delta$. The proof is now complete.

\section{Sets of non-uniqueness in $\ell^r$}

\subsection{Constructing non-uniqueness sets in $\ell^r$}

In this subsection, we modify the building blocks introduced
earlier in order to construct sets of the form \eqref{EQ:EDEF}
which fail to be uniqueness sets for $\ell^r$ when
$1<r<2$. Our principal result here reads as follows.

\begin{prop}\thlabel{PROP:NUNIQAr}
Fix $1<r<2$ and let $0<\delta<1$. Suppose $\{\delta_j\}_{j}$ are positive numbers satisfying
\[
\sum_j \delta_j \le \delta,
\qquad 
\sum_j \delta_j^{\,r-1} < \infty.
\]
Then there exists integers $\{N_j\}_j$ such that the
corresponding set $E$ in \eqref{EQ:EDEF} satisfies
$m(E)\ge 1-\delta$ and supports a non-negative and non-trivial
function
\[
f \in L^2(E)\cap A_r(\T).
\]
\end{prop}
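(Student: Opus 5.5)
We take $f$ to be an infinite Riesz-type product of the complementary building blocks. Put $\psi_j := 1-\phi_j$, or better a smoothed version $\psi_j := 1-\delta_j\,\phi_{N_j,\delta_j,l_j}\cdot c_j$, normalized so that $0\le \psi_j\le 1$ and $\psi_j$ vanishes on the open arcs $U_{N_j,\delta_j}$. Concretely, let $\chi_j(\zeta)=\chi(\zeta^{N_j})$, where $\chi$ is a fixed-shape smooth bump adapted to an arc of length comparable to $\delta_j$, with $0\le\chi\le1$ and $\chi=1$ on the arc of length $\delta_j$. Set $\psi_j=1-\chi_j$. Then $\widehat\psi_j(0)=1-O(\delta_j)$, $\widehat\psi_j$ is supported on $N_j\mathbb Z$, and
\[
\sum_{n\neq0}|\widehat\psi_j(n)|^r \lesssim \sum_{k\neq 0}\min\bigl(\delta_j, \delta_j(\delta_j|k|)^{-2}\bigr)^r \asymp \delta_j^{r-1},
\]
since the bump of width $\delta_j$ has coefficients of size about $\delta_j$ up to frequency $1/\delta_j$. (The widened arcs change $m(E)$ only by a constant factor in $\sum\delta_j$; rescale $\delta$ accordingly.) We define $f_J=\prod_{j\le J}\psi_j$ and $f=\lim f_J$. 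Then $0\le f\le 1$ and $f=0$ on $\bigcup_j U_{N_j,\delta_j}$, so $f\in L^2(E)$. Moreover $\int f\,dm\ge 1-\sum_j m(\supp \chi_j)\ge 1-C\delta>0$, so $f$ is nontrivial.

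The main step is the $A_r$ bound. Since $A_r$ for $r\ge1$ is a Banach algebra under pointwise multiplication only when $r=1$, we cannot simply multiply norms. Instead we use lacunarity of the $N_j$: choosing $N_{j+1}$ much larger than $N_j\cdot(\text{effective bandwidth of }\psi_j)$, the spectra become essentially independent. Write $\psi_j=a_j+g_j$ with $a_j=\widehat\psi_j(0)$ and $\widehat g_j(0)=0$. Expanding the product, $f_J$ is a sum over subsets $S$ of terms $\prod_{j\notin S}a_j\prod_{j\in S}g_j$. If the frequency sums $\sum_{j\in S}N_jk_j$ are all distinct (a dissociation condition), the $\ell^r$ norm factorizes:
\[
\|f_J\|_{A_r}^r\le\prod_{j\le J}\bigl(|a_j|^r+\|g_j\|_{A_r}^r\bigr)\le\prod_j\bigl(1+C\delta_j^{r-1}\bigr),
\]
which is bounded uniformly in $J$ by the hypothesis $\sum\delta_j^{r-1}<\infty$. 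Weak-$*$ compactness in $A_r=(A_{r'})'$, together with $f_J\to f$ in $L^2$, then gives $f\in A_r$.

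The obstacle is that $g_j$ is not band-limited, so exact dissociation fails. The first remedy is to truncate: replace $\chi$ by a trigonometric-polynomial approximation $P_j$ of degree $K_j\approx \delta_j^{-1}\log(1/\delta_j)$ in $\zeta^{N_j}$, keeping $0\le 1-P_j\le1$ away from the arc and $P_j\ge1$ on it. Alternatively, take the smooth $\chi$ and bound the tail contribution by an error that is summable in $j$. With bandwidth $K_j$, choosing $N_{j+1}>2\sum_{i\le j}N_iK_i$ makes all sums $\sum N_jk_j$ with $|k_j|\le K_j$ distinct, and the factorization above is exact. To keep the vanishing on $U_{N_j,\delta_j}$ we cannot truncate freely. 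The cleanest fix is a Fejér-type nonnegative polynomial $P_j$ in $\zeta^{N_j}$ with $P_j\ge 1$ on the arcs. We then set $\psi_j=(1-P_j)_+$, but this reintroduces non-band-limitedness. For this reason the step to verify carefully is the error estimate: we aim to show $\|\psi_j-\text{trunc}_{K_j}\psi_j\|_{A_1}\le \delta_j 2^{-j}$ using smoothness (coefficient decay as in property (b) with $l_j\asymp\log(1/\delta_j)$). We then control the product of perturbations in $A_1\subset A_r$, using $\|\cdot\|_{A_1}\le 1$-type bounds for nonnegative functions with mean at most $1$ via $\|h u\|_{A_r}\le\|h\|_{A_1}\|u\|_{A_r}$.
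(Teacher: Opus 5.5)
Your route differs from the paper's, but as written its decisive step has a gap. Your ingredients are smooth complementary factors $1-\chi_j$, truncation to bandwidth $K_j$ in the variable $\zeta^{N_j}$, exact dissociation once $N_{j+1}>2\sum_{i\le j}N_iK_i$, and Young's inequality $\|hu\|_{A_r}\le\|h\|_{A_1}\|u\|_{A_r}$ for the errors. The paper instead applies the Hirschman--Katznelson lemma (\thref{LEM:AO}) inductively. It chooses $N_{k+1}$ so that the $A_1$-tail of the partial product $h_k$ beyond $N_{k+1}$ is at most $\delta_k\|h_k\|_{A_r}$, which gives $\|h_{k+1}\|_{A_r}\le e^{\delta_k}\|1-\psi_{k+1}\|_{A_r}\|h_k\|_{A_r}$.

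You leave the key step as ``we aim to show'', and the mechanism you announce for it is false: nonnegative functions of mean at most $1$ need not have $A_1$-norm at most $1$. Since $\sum_k|\widehat{\chi_j}(k)|\ge\chi_j(1)=1$, one gets
\[
\|1-\chi_j\|_{A_1}\ge 2\bigl(1-\widehat{\chi_j}(0)\bigr)\approx 2,
\]
and a product of $J$ such essentially dissociated factors has $A_1$-norm growing exponentially in $J$. So any estimate that places the complementary factors, or products of them, in $A_1$ loses uniformity in $J$.

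The fix uses only tools you already named. Write $\psi_j=\tilde\psi_j+e_j$, where $\tilde\psi_j$ keeps the frequencies $N_jk$ with $|k|\le K_j$, and set $\eta_j=\|e_j\|_{A_1}$. Since $\eta_j$ does not depend on $N_j$, any fixed smoothness of $\chi$ lets you fix $K_j$ with $\sum_j\eta_j<\infty$ before choosing $N_{j+1}$. The detours via Fej\'er polynomials, $(1-P_j)_+$ and $l_j\asymp\log(1/\delta_j)$ can therefore be dropped. Expand $f_J=\prod_{j\le J}(\tilde\psi_j+e_j)$ over subsets $S$, and put only the errors in $A_1$:
\[
\Bigl\|\prod_{j\in S}e_j\prod_{j\notin S}\tilde\psi_j\Bigr\|_{A_r}\le\prod_{j\in S}\eta_j\prod_{j\notin S}\|\tilde\psi_j\|_{A_r}.
\]
This follows from Young's inequality, the algebra property of $A_1$, and exact factorization of the $\ell^r$-norm over any subfamily of the dissociated truncated factors. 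Summing over $S$ gives
\[
\|f_J\|_{A_r}\le\prod_j(\|\psi_j\|_{A_r}+\eta_j)\le\prod_j(1+C\delta_j^{r-1}+\eta_j)<\infty,
\]
and your limiting argument then finishes the proof.

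Your corrected route is self-contained and gives exact factorization without an external lemma. The paper's route is shorter given \thref{LEM:AO}; truncating the partial product rather than each factor is essentially the mechanism behind that lemma.

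One smaller correction concerns non-triviality. Widening the bumps does not affect $m(E)\ge1-\delta$, since $E$ is defined through $U_{N_j,\delta_j}$. However, your bound $\int f\,dm\ge1-\sum_jm(\supp{\chi_j})>0$ needs the widening factor $1+\varepsilon_0$ chosen with $(1+\varepsilon_0)\delta<1$, as the paper does. You cannot ``rescale $\delta$'', because $\delta$ and $\{\delta_j\}$ are given.
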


As will be clear from the construction, the conclusion holds whenever the sequence $\{N_j\}$ grows sufficiently rapidly. Note also that the compact sets have empty interior. 

\begin{proof}
We retain the notation of the previous subsection. Consider the functions
\[
\psi_j(\zeta)
:= \mathbbm{1}_{I(\delta_j)} * \phi_{\varepsilon_0\delta_j,l}(\zeta),
\qquad \zeta\in\T,
\]
where $\varepsilon_0>0$ chosen so that
\[
\sum_{j=1}^\infty (1+\varepsilon_0)\delta_j < 1,
\]
ensuring that $m(E)>0$. We now list some properties of $\psi_j$:

\begin{enumerate}
\item[(a)] $\psi_j\ge0$ and $\psi_j\in C^{l-1}(\T)$;
\item[(b)] $\supp{ \psi_j} \subseteq I((1+\varepsilon_0)\delta_j)$ and $\psi_j\equiv1$ on $I(\delta_j)$;
\item[(c)] $\widehat{\psi_j}(0)=\delta_j$, and for $n\ne0$,
\[
|\widehat{\psi_j}(n)|
= \frac{|\sin(\pi\delta_j n)|}{\pi|n|}
\left|\frac{\sin(\pi\varepsilon_0\delta_j n/l)}{\pi\varepsilon_0\delta_j n/l}\right|^{l}
\le 
\delta_j \min \biggl(1,\, C(l,\delta)\delta_j^{-l}|n|^{-l}\biggr).
\]
\end{enumerate}

In particular,
\begin{equation}\label{EQ:Arest}
\|\psi_j\|_{A_r}
\le C(l,\delta)\,\delta_j^{\,1-1/r},
\qquad j=1,2,3,\ldots.
\end{equation}

Define
\begin{equation}\label{EQ:hn}
h_n(\zeta)
:= \prod_{j=1}^n \bigl(1-\psi_j(\zeta^{N_j})\bigr),
\qquad \zeta\in\T.
\end{equation}

The following lemma isolates the essential mechanism.

\begin{lemma}\thlabel{LEM:AQ}
If 
\[
\sup_{n\ge1}\|h_n\|_{A_r}<\infty,
\]
then there exists a non-negative function $h\in L^2(E)$ such that $h\in A_r(\T)$.
\end{lemma}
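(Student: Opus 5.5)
The natural plan is a weak compactness argument. The functions $h_n$ decrease pointwise, $h_1\ge h_2\ge\cdots$, since each factor $1-\psi_j(\zeta^{N_j})$ takes values in $[0,1]$ by properties (a)--(b) of $\psi_j$ (we use $0\le\psi_j\le1$, which holds because $\phi_{\varepsilon_0\delta_j,l}$ has unit mass). Hence $h_n\to h$ pointwise on $\T$, where $h:=\inf_n h_n\in[0,1]$.

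Since $1-\psi_j(\zeta^{N_j})$ vanishes wherever $\psi_j(\zeta^{N_j})=1$, we get $h=0$ on the set $\{\zeta:\zeta^{N_j}\in I(\delta_j)\}$ for every $j$. Up to the identification of $U_{N_j,\delta_j}$ with these arcs (the dilation used in \eqref{EQ:EDEF}), this set contains $U_{N_j,\delta_j}$. So $h$ vanishes on $\bigcup_j U_{N_j,\delta_j}=\T\setminus E$, that is, $h$ is supported in $E$. Being bounded, $h$ lies in $L^\infty\subset L^2$, so $h\in L^2(E)$ and $h\ge0$.

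To show $h\in A_r(\T)$, we use dominated convergence. Since $0\le h_n\le1$, it gives $h_n\to h$ in $L^1(\T)$, so $\widehat{h}_n(k)\to\widehat h(k)$ for every $k\in\mathbb Z$. Fatou's lemma for sums then yields, for each finite $K$,
\[
\sum_{|k|\le K}|\widehat h(k)|^r=\lim_n\sum_{|k|\le K}|\widehat h_n(k)|^r\le\sup_n\|h_n\|_{A_r}^r<\infty .
\]
Letting $K\to\infty$ gives $\|h\|_{A_r}\le\sup_n\|h_n\|_{A_r}$. Alternatively, since $\ell^r$ is reflexive for $1<r<2$, one may extract a weak$^*$ limit of $\{\widehat h_n\}$ and identify it with $\widehat h$ through the coordinatewise limits.

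The only delicate point is making sure that the zero sets of the factors genuinely cover the removed arcs $U_{N_j,\delta_j}$ in \eqref{EQ:EDEF}. This is bookkeeping about normalizations: arcs of length $\delta_j$ in $\zeta^{N_j}$ correspond to $N_j$ arcs of length $\delta_j/N_j$. The lemma does not assert non-triviality, so that is not needed here. If it were, it would follow from $\widehat h(0)=\lim_n\widehat h_n(0)$, which can be kept bounded below via $\widehat h_n(0)\ge 1-\sum_j(1+\varepsilon_0)\delta_j>0$. That bound holds only when the $N_j$ grow rapidly enough that the factors behave almost independently, and it belongs to the subsequent part of Proposition~\ref{PROP:NUNIQAr}.
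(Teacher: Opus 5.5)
Your proof is correct, but it obtains the limit function differently from the paper. The paper uses weak compactness of bounded sets in $A_r(\T)$ (reflexivity of $\ell^r$) to extract a weakly convergent subsequence $h_{n_k}\to h$. It then simply asserts that non-negativity and the support condition pass to the weak limit.

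You instead exploit the product structure. Every factor $1-\psi_j(\zeta^{N_j})$ lies in $[0,1]$, so the $h_n$ decrease pointwise and the whole sequence converges to the explicit function $h=\inf_n h_n$. Dominated convergence then gives convergence of each Fourier coefficient, and Fatou on finite coefficient sums gives $\|h\|_{A_r}\le\sup_n\|h_n\|_{A_r}$.

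What your route buys:
\begin{itemize}
\item No subsequence is needed.
\item You get the stronger conclusion $0\le h\le 1$, so $h\in L^\infty(E)$.
\item Positivity and vanishing on each set $\{\zeta:\zeta^{N_j}\in I(\delta_j)\}$ are checked pointwise via $h\le h_j$. This sidesteps the paper's inaccurate remark that $\supp{h_n}\subseteq E$ for every $n$; in fact $h_n$ only vanishes on the first $n$ removed sets.
\item The Fatou step also works for $r=1$, where bounded sets of $\ell^1$ are not weakly compact (the paper writes $1\le r\le 2$).
\end{itemize}
The paper's route is more abstract and needs no monotonicity, so it applies to any bounded sequence with shrinking supports. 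Its cost is the routine but unstated check that positivity and support survive the weak limit, via convergence of Fourier coefficients and pairing with test functions.

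One correction to your closing remark: non-triviality does not depend on rapid growth of the $N_j$. Let $F$ be the set where $\psi_j(\zeta^{N_j})=0$ for all $j$. Every $h_n$ equals $1$ on $F$. Since $\zeta\mapsto\zeta^{N}$ preserves $m$, subadditivity gives $m(F)\ge 1-\sum_j(1+\varepsilon_0)\delta_j>0$ for any choice of the $N_j$. Hence $\widehat h(0)\ge m(F)>0$ unconditionally. Alternatively, use $\prod_j(1-a_j)\ge 1-\sum_j a_j$ for $a_j\in[0,1]$ to get $\widehat{h}_n(0)\ge 1-\sum_j\delta_j$. The growth of the $N_j$ is needed only for the uniform $A_r$ bound, not here.
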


\begin{proof}
Since $1\le r\le2$, bounded subsets of $A_r(\T)$ are weakly compact. Hence we may extract a weakly convergent subsequence $h_{n_k}\to h$ in $A_r(\T)$. Note that $h$ is also non-negative, since each $h_n$ are. Furthermore, it follows by (b) that
\[
\supp{1-\psi_j(\zeta^{N_j})}
\subseteq \T\setminus U_{N_j,\delta_j},
\]
so $\supp{h_n}\subseteq E$ for every $n$, and therefore $\supp{h}\subseteq E$. 
\end{proof}

To ensure uniform boundedness of $\|h_n\|_{A_r}$ we repeatedly use the following almost-orthogonality estimate.

\begin{lemma}[Hirschman--Katznelson, Lemma~2.a]\thlabel{LEM:AO}
Let $\psi,\phi\in A_1(\T)$ be real-valued and $r\ge1$. If $N>0$ and $\gamma>0$ satisfy
\[
\sum_{|n|\ge N} |\widehat{\phi}(n)|
\le \gamma\,\|\phi\|_{A_r},
\]
then
\[
\|\psi_N\cdot\phi\|_{A_r}
\le e^{\gamma}\,\|\psi\|_{A_r}\,\|\phi\|_{A_r},
\]
where $\psi_N(\zeta):=\psi(\zeta^N)$.
\end{lemma}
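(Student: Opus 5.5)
The plan is to compute the Fourier coefficients of $\psi_N\cdot\phi$ along residue classes modulo $N$ and apply Young's inequality on each class. Since $\widehat{\psi_N}$ is supported on $N\mathbb{Z}$ with $\widehat{\psi_N}(kN)=\widehat\psi(k)$, we have
\[
\widehat{\psi_N\phi}(a+jN)=\sum_{k\in\mathbb Z}\widehat\psi(k)\,\widehat\phi\bigl(a+(j-k)N\bigr),\qquad 0\le a<N,\ j\in\mathbb Z .
\]
For a fixed residue $a$, put $b^a_k:=\widehat\phi(a+kN)$. Then the restriction of $\widehat{\psi_N\phi}$ to the class $a+N\mathbb Z$ is the convolution $\widehat\psi*b^a$. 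Young's inequality $\ell^r*\ell^1\subset\ell^r$ gives
\[
\|\psi_N\phi\|_{A_r}^r=\sum_{a=0}^{N-1}\|\widehat\psi*b^a\|_{\ell^r}^r\le\|\psi\|_{A_r}^r\sum_{a=0}^{N-1}\|b^a\|_{\ell^1}^r .
\]
Since $\phi\in A_1(\T)$, every $b^a$ lies in $\ell^1$. This reduces the lemma to the purely one-variable inequality
\[
\Bigl(\sum_{a=0}^{N-1}\|b^a\|_{\ell^1}^r\Bigr)^{1/r}\le e^{\gamma}\|\phi\|_{A_r}. \tag{$\ast$}
\]

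To prove $(\ast)$, I split each $b^a$ into one distinguished entry $x_a$ plus the remainder, whose $\ell^1$-sum I call $s_a$. Minkowski's inequality in $\ell^r$ over $a$ then gives
\[
\Bigl(\sum_a\|b^a\|_1^r\Bigr)^{1/r}\le\Bigl(\sum_a|x_a|^r\Bigr)^{1/r}+\Bigl(\sum_a s_a^r\Bigr)^{1/r}\le\|\phi\|_{A_r}+\sum_a s_a .
\]
The first bound holds because the distinguished entries are distinct Fourier coefficients of $\phi$. If $\sum_a s_a\le\sum_{|n|\ge N}|\widehat\phi(n)|\le\gamma\|\phi\|_{A_r}$, then $(\ast)$ follows with the constant $1+\gamma\le e^{\gamma}$. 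Taking the $r$-th root in the Young estimate then finishes the proof.

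The main obstacle is choosing the distinguished entries so that $\sum_a s_a$ is controlled by the tail $|n|\ge N$ alone. For $0<a<N$, the class $a+N\mathbb Z$ meets the window $(-N,N)$ twice, at $a$ and at $a-N$. The non-distinguished one of these two coefficients is not a tail coefficient. This is exactly where a naive argument loses a factor $2^{1-1/r}$ or needs the threshold $N/2$.

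My first attempt to avoid this loss keeps both in-window coefficients together as a two-term main part for each class, and exploits that $\phi$ is real-valued, so $\widehat\phi(-n)=\overline{\widehat\phi(n)}$. The idea is to pair class $a$ with class $N-a$. Their in-window coefficients $\widehat\phi(a),\widehat\phi(a-N)$ and $\widehat\phi(N-a),\widehat\phi(-a)$ have the same moduli, so the convolution estimate could be run jointly on the pair. I would then try to show that for the pair the main-part contribution is at most $\bigl(|\widehat\phi(a)|^r+|\widehat\phi(a-N)|^r\bigr)$ times $\|\psi\|_{A_r}^r$, by exploiting this symmetry. If that succeeds, Minkowski as above gives exactly $1+\gamma\le e^\gamma$. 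If the symmetry argument fails, I expect the stated bound to require reading the hypothesis with the window adapted to residues modulo $N$, and I would check whether that is how the cited Lemma~2.a of Hirschman--Katznelson is formulated.
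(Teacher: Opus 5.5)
\textbf{There is a genuine gap: the step you leave open cannot be filled, because the statement as written (tail over $|n|\ge N$) is false for every $r>1$.} Your reduction to $(\ast)$ via Young's inequality on residue classes is correct. The failure holds even for real-valued trigonometric polynomials, so no symmetry argument can rescue it.

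Take $N=2$, $\psi(\zeta)=1+\zeta+\bar\zeta$ and $\phi(\zeta)=\zeta+\bar\zeta$. Then $\widehat\phi(n)=0$ for $|n|\ge 2$, so the hypothesis holds for every $\gamma>0$. The conclusion would therefore force $\|\psi_2\phi\|_{A_r}\le\|\psi\|_{A_r}\|\phi\|_{A_r}$. But $\psi_2\phi=\zeta^3+2\zeta+2\bar\zeta+\bar\zeta^3$, so $\|\psi_2\phi\|_{A_r}^r=2+2^{r+1}>6=\|\psi\|_{A_r}^r\|\phi\|_{A_r}^r$ whenever $r>1$.

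Pairing the classes $a$ and $N-a$ does not help when they are distinct either. For $N=3$, $\phi=\zeta+\bar\zeta+\zeta^2+\bar\zeta^2$ and the same $\psi$, one gets $\|\psi_3\phi\|_{A_2}^2=20>12=\|\psi\|_{A_2}^2\|\phi\|_{A_2}^2$. The mechanism is exactly the one you isolated. Both $\widehat\phi(a)$ and $\widehat\phi(a-N)$ lie in the window, and
$\widehat{\psi_N\phi}(a+jN)=\widehat\psi(j)\widehat\phi(a)+\widehat\psi(j+1)\widehat\phi(a-N)$
adds them coherently. The relation $\widehat\phi(-n)=\overline{\widehat\phi(n)}$ only makes the classes $a$ and $N-a$ look alike; it does not prevent this addition.

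\textbf{Your fallback is the right repair.} The paper gives no proof of this lemma; it only cites Hirschman--Katznelson. Read the hypothesis as $\sum_{|n|\ge N/2}|\widehat\phi(n)|\le\gamma\|\phi\|_{A_r}$.
\begin{itemize}
\item Each residue class modulo $N$ then meets the window $|n|<N/2$ in at most one point.
\item Take that coefficient as $x_a$, or $x_a=0$ if there is none. This gives $\sum_a s_a\le\sum_{|n|\ge N/2}|\widehat\phi(n)|\le\gamma\|\phi\|_{A_r}$.
\item Your Young--Minkowski argument then closes with the constant $1+\gamma\le e^{\gamma}$, and real-valuedness is never used.
\end{itemize}
Equivalently, split $\phi=\phi_0+\phi_1$ with $\widehat{\phi_0}$ supported in $|n|<N/2$. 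The translates of $\operatorname{supp}\widehat{\phi_0}$ by $N\mathbb{Z}$ are disjoint, so $\|\psi_N\phi_0\|_{A_r}=\|\psi\|_{A_r}\|\phi_0\|_{A_r}$. Young's inequality gives $\|\psi_N\phi_1\|_{A_r}\le\|\psi\|_{A_r}\|\phi_1\|_{A_1}$.

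This corrected version is all that the proof of \thref{PROP:NUNIQAr} needs. There $N_{k+1}$ is simply chosen large, so one can equally require the tail of $\widehat{h_k}$ beyond $N_{k+1}/2$ to be at most $\delta_k\|h_k\|_{A_r}$.
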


Assume
\begin{equation}\label{EQ:rsum}
\sum_j \delta_j \le \delta,
\qquad 
\sum_j \delta_j^{\,r-1}<\infty.
\end{equation}

It follows from \eqref{EQ:Arest} and the fact that $\widehat{1-\psi_j}(0)=1-\delta_j$ that there exists $c=c(l,\delta)>0$ such that
\[
\|1-\psi_j\|_{A_r}
\le \exp(c\,\delta_j^{\,r-1}).
\]

We shall construct $\{N_j\}$ inductively. Choose $N_1\ge1$ arbitrarily and set
\[
h_1 = 1-\psi_1(\zeta^{N_1}).
\]
Having chosen $N_1<\ldots<N_k$, select $N_{k+1}>N_k$ sufficiently large so that
\[
\sum_{|n|\ge N_{k+1}} |\widehat{h}_k(n)|
\le \delta_k\,\|h_k\|_{A_r}.
\]
Applying \thref{LEM:AO} with $\phi=h_k$ and $\psi=1-\psi_{k+1}$ yields
\[
\|h_{k+1}\|_{A_r}
\le e^{\delta_k}
\|1-\psi_{k+1}\|_{A_r}
\|h_k\|_{A_r}.
\]
Iterating this estimate gives
\[
\|h_{k+1}\|_{A_r}
\le 
\exp \biggl(\sum_{j=1}^k \delta_j\biggr)
\exp \biggl(c\sum_{j=1}^{k+1}\delta_j^{\,r-1}\biggr).
\]
In view of \eqref{EQ:rsum}, the right-hand side remains uniformly bounded in $k$, hence
\[
\sup_{n\ge1}\|h_n\|_{A_r}<\infty,
\]
and the result follows from \thref{LEM:AQ}.
\end{proof}

We complete this subsection by the following simple remark. Let
$g(t)$ be continuous increasing function on $[0,1]$ with the
property that
\[
\frac{g (t)}{t^{r-1}} \downarrow 0, \qquad t\downarrow 0, 
\]
for all $1<r<2$. For instance, one can take
$g(t) = \exp(-\log^2(t))$. Now if we were to choose the
parameters $\{\delta_j\}_j$ to satisfy
\[
  \sum_j \delta_j \leq \delta, \qquad \sum_j g(\delta_j) <
  \infty,
\]
then the proof actually gives a non-negative function $f\in L^2(E)$ with 
\[
\{\widehat{f}(n)\}_{n\in \mathbb{Z}} \in \bigcap_{r>1} \ell^r.
\]

\subsection{Uniqueness sets vs non-uniqueness}

We now illustrate the strength of the preceding constructions by recovering, in a streamlined manner, the following sharp main result of I.~Hirschman and Y.~Katznelson in \cite{hirschman1965sets}.

\begin{cor}[Hirschman--Katznelson, {\cite{hirschman1965sets}}] \thlabel{COR:HIRSKATZ}
Let $1<q<r<2$. Then there exist compact sets $E\subset\T$ with $m(E)$ arbitrarily close to $1$ such that:
\begin{enumerate}
\item[(a)] $E$ supports a positive measure $\mu\in A_r(\T)$.
\item[(b)] $E$ supports no non-trivial measure $\mu\in A_q(\T)$.
\end{enumerate}
\end{cor}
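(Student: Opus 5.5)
The plan is to build a single set $E$ of the form \eqref{EQ:EDEF} and to pick the parameters $\{\delta_j\}_j$ and $\{N_j\}_j$ so that both mechanisms from the paper apply at once. Proposition~\ref{PROP:NUNIQAr} will give the positive measure in $A_r(\T)$. The block argument from Steps~1--2 of the proof of \thref{THM:MAIN} will show that $E$ supports no nontrivial measure in $A_q(\T)$.

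\textbf{Choosing $\{\delta_j\}_j$.} We need
\[
\sum_j \delta_j\le\delta,\qquad \sum_j\delta_j^{\,r-1}<\infty,\qquad \sum_j\Bigl(\frac{\delta_j}{\log\frac1{\delta_j}}\Bigr)^{q-1}=\infty .
\]
Set $\delta_j=c\,j^{-s}$ with $s>1$ and $c$ small, and fix $s$ so that $\frac{1}{r-1}<s<\frac{1}{q-1}$, which is possible because $q<r$. Then $s(r-1)>1$ gives $\sum_j\delta_j^{r-1}<\infty$. Also $s(q-1)<1$, so $\sum_j j^{-s(q-1)}(\log j)^{-(q-1)}$ diverges, since the logarithm does not affect divergence of a power series with exponent below $1$.

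\textbf{Part (b).} Take any nontrivial $\mu\in M(E)$ normalized by $\widehat\mu(0)=1$. Step~1 of the proof of \thref{THM:MAIN} gives \eqref{EQ:blocklq} for each $j$. That step uses only that $\phi_j$ vanishes on $E$ and the tail bound \eqref{EQ:blocktail} with $l_j\asymp\log\frac1{\delta_j}$, so it holds for any choice of $N_j$. If the blocks $\Lambda_j=\{N_jn:0<|n|\le M_j\}$ are pairwise disjoint, summing over $j$ gives $\sum_n|\widehat\mu(n)|^q=\infty$ by the divergence above. Disjointness holds for any sufficiently rapidly growing sequence; for instance $N_{k+1}>N_kM_k$ suffices, since then every element of $\Lambda_{k+1}$ has modulus larger than every element of $\bigcup_{j\le k}\Lambda_j$. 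Entropy plays no role here, so no bound on $N_j$ is needed.

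\textbf{Part (a) and the main obstacle.} The proof of Proposition~\ref{PROP:NUNIQAr} chooses $N_{k+1}$ inductively, only requiring it to be larger than a threshold depending on $h_k$, namely $\sum_{|n|\ge N_{k+1}}|\widehat h_k(n)|\le\delta_k\|h_k\|_{A_r}$. At each stage we therefore take $N_{k+1}$ larger than both that threshold and $N_kM_k$. Both constraints are lower bounds, so they are compatible, and the Hirschman--Katznelson iteration (\thref{LEM:AO}, then \thref{LEM:AQ}) yields a nonnegative nontrivial $f\in L^2(E)\cap A_r(\T)$; then $d\mu=f\,dm$ is the required positive measure. The one delicate point is that the two constructions use different cut-out arcs. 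The uniqueness argument needs $\phi_{N_j,\delta_j,l_j}$ to vanish on $E$, i.e. $E$ must avoid the open support $U_{N_j,\delta_j}$. The non-uniqueness argument needs $1-\psi_j(\zeta^{N_j})$ to vanish on the removed arcs, i.e. the support of $\psi_j$ must be at least as large as those arcs. I would resolve this by defining $E$ with removed arcs equal to the support of $\phi_{N_j,\delta_j,l_j}$, and enlarging the plateau of $\psi_j$ to $\mathbbm{1}_{I(c'\delta_j)}$ with a fixed constant $c'$ so that it covers those arcs. This changes $\delta_j$ only by constant factors, so all three summability conditions above are unaffected, and $c$ is then chosen small enough that $m(E)\ge1-\delta$.
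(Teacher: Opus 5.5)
Your proposal is correct and is essentially the paper's proof. You pick $\{\delta_j\}_j$ with $\sum_j\delta_j\le\delta$, $\sum_j\delta_j^{r-1}<\infty$ and $\sum_j(\delta_j/\log\frac{1}{\delta_j})^{q-1}=\infty$, let $\{N_j\}_j$ grow fast enough for both \thref{PROP:NUNIQAr} and pairwise disjointness of the blocks $\Lambda_j$, and then invoke that proposition for (a) and Steps~1--2 of the proof of \thref{THM:MAIN} for (b); your simpler power choice $\delta_j=cj^{-s}$ with $\frac{1}{r-1}<s<\frac{1}{q-1}$, the explicit disjointness condition $N_{k+1}>N_kM_k$, and the remark about matching the plateau of $\psi_j$ to the removed arcs are harmless refinements of details the paper leaves implicit.
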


\begin{proof}[Proof of \thref{COR:HIRSKATZ}]
  Fix $1<q<r<2$. For any fixed $0<\delta<1$, it is a simple
  exercise in calculus to select positive numbers
  $\{\delta_j\}_j$ with the following properties:
  \begin{equation}\label{EQ:deltajselect}
    \sum_j \delta_j \leq \delta, \qquad \sum_j \delta^{r-1}_j <
    \infty, \qquad \sum_j \biggl( \frac{\delta_j}{\log
      \frac{1}{\delta_j}}\biggr)^{q-1}= \infty.
  \end{equation}
  For instance, we may choose
  $\delta_j = c j^{-\frac{1}{q-1}} (\log (1+j))^{\frac{q}{q-1} -
      \varepsilon}$, for $c,\varepsilon > 0$. When $c$ is
    sufficiently small, we have $\sum_j \delta_j \leq \delta$.

We now choose integers $\{N_j\}_j$ sufficiently large so that \thref{PROP:NUNIQAr} applies and, moreover, so that the frequency blocks
\[
\Lambda_j
:=
\left\{ nN_j : 0<|n|\le \left\lceil \frac{1}{\delta_j}\log\frac{1}{\delta_j} \right\rceil \right\},
\qquad j\ge1,
\]
are pairwise disjoint. By \thref{PROP:NUNIQAr}, the associated set $E$ supports a positive function 
$f\in L^2(E)\cap A_r(\T)$, hence a positive measure in $A_r(\T)$. On the other hand, by Step~2 of \thref{THM:MAIN}, for any $\mu\in M(E)$ (without loss of generality normalized by $\mu(E)=1$), satisfies
\[
\sum_{n\in \mathbb{Z}} |\widehat{\mu}(n)|^q
\ge
\sum_j \sum_{n\in\Lambda_j} |\widehat{\mu}(n)|^q
\gtrsim
\sum_j 
\biggl( \frac{\delta_j}{\log \frac{1}{\delta_j}}\biggr)^{q-1}
=
\infty.
\]
Thus $E$ supports no non-trivial measure in $A_q(\T)$.
\end{proof}

In fact, the construction can easily be modified to yield slightly stronger statements. 

For fixed $r\in(1,2)$ one may arrange that
\[
(a') \qquad E \text{ supports a positive measure } 
\mu \in \bigcap_{s>r} A_s(\T),
\]
while for fixed $q\in(1,2)$ one may ensure
\[
(b') \qquad E \text{ supports no non-trivial measure } 
\mu \in \bigcup_{s<q} A_s(\T).
\]

These are obtained by a minor refinements in the choice of $\{\delta_j\}_j$ in \eqref{EQ:deltajselect}, and we omit the details. However, the strengthened forms where previously obtained in the work of J. Rosenblatt and K. Shuman \cite{rosenblatt2003cyclic}, with different proofs from ours.

\subsection{Uniqueness sets via an approximation scheme}

Our proof of \thref{THM:MAIN} produces uniqueness sets for
$\ell^q $ without invoking an explicit approximation
scheme. However, by \thref{THM:CHARlq}, such a scheme must exist
whenever $E$ is a uniqueness set for $\ell^q $, namely
one ought to find functions satisfying
\[
\phi_j \in C(\T\setminus E),
\qquad
\|\phi_j - 1\|_{A_p} \to 0.
\]
At the
$N$-th stage of D.~Newman’s construction
\cite{newman1964closure}, the uniqueness sets appear as
complements of $k \gg \sqrt{N}$ intervals of length $1/N$,
forming the bases of triangular functions of height $N/k$, and
achieving the appropriate Fourier decay hinges on a delicate
result in additive number theory. Y.~Katznelson
\cite{katznelson1964sets}, introduced instead an averaging
procedure based on almost orthogonality, later refined together
with I.~Hirschman \cite{hirschman1965sets}. We sketch a variant of this approach. It yields uniqueness sets of the form
\eqref{EQ:EDEF}, though without optimal control of the parameters
$\{N_j\}_j$, which, as observed above, govern the entropic
behaviour of our sets.

For $\delta>0$ and $N\ge1$, define
\[
\psi_{\delta,N}(\zeta)
=
\delta^{-1}\mathbbm{1}_{I(\delta)}(\zeta^N),
\qquad \zeta\in\T.
\]
Then $\psi_{\delta,N}=\mathbbm{1}_{U_{\delta,N}}$ and
\[
\widehat{\psi_{\delta,N}}(0)=1,
\qquad
\widehat{\psi_{\delta,N}}(n)=0 \ \text{if } N\nmid n,
\]
while for $n\neq0$,
\[
|\widehat{\psi_{\delta,N}}(Nn)|
\lesssim
\min \bigl(1,(\delta|n|)^{-1}\bigr).
\]

Fix $0<\varepsilon<1$ and let $M>1$ be an integer to be determined later. Set $\delta=\varepsilon/M$, and for integers $N_1<\ldots<N_M$, define
\[
E(\delta,M)
:=
\bigcap_{j=1}^M (\T\setminus U_{\delta,N_j}),
\qquad
f_M
:=
\frac1M \sum_{j=1}^M \psi_{\delta,N_j}.
\]
Then we have
\[
m(E(\delta,M)) \ge 1-M\delta = 1-\varepsilon,
\qquad
f_M=0 \text{ on } E(\delta,M),
\qquad
\widehat{f_M}(0)=1.
\]
The following lemma summarizes the essence of this construction.

\begin{lemma}\thlabel{LEM:finiteapprox}
Let $p>2$ and $0<\varepsilon<1$. Then there exist integers
$M\ge1$ and $N_1<\ldots<N_M$ such that
\[
m(E(\varepsilon/M,M)) \ge 1-\varepsilon
\]
and
\[
\|f_M - 1\|_{A_p} \le \varepsilon.
\]
\end{lemma}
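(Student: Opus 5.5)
The plan is to estimate the Fourier coefficients of $f_M-1$ directly. The measure statement is already settled: the computation preceding the lemma gives $m(E(\delta,M))\ge 1-M\delta=1-\varepsilon$ for any choice of $N_1<\ldots<N_M$, so only the $A_p$-bound needs proof. Put $\delta=\varepsilon/M$ and $g_j:=\psi_{\delta,N_j}-1$. Then $f_M-1=\frac1M\sum_{j=1}^M g_j$, and $g_j$ has Fourier support in $\{N_jk:k\neq0\}$ with
\[
|\widehat{g_j}(N_jk)|\lesssim \min\bigl(1,(\delta|k|)^{-1}\bigr).
\]
The heuristic is as follows. If the $g_j$ had pairwise disjoint Fourier supports, then
\[
\Bigl\|\sum_j g_j\Bigr\|_{A_p}^p=\sum_j\|g_j\|_{A_p}^p\lesssim M\cdot\delta^{-1}=M^2/\varepsilon,
\]
because $\sum_{k\neq0}\min(1,(\delta|k|)^{-p})\asymp \delta^{-1}$ for $p>1$. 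This would give
\[
\|f_M-1\|_{A_p}\lesssim M^{-1}(M^2/\varepsilon)^{1/p}=M^{2/p-1}\varepsilon^{-1/p}.
\]
Since $p>2$ the exponent $2/p-1$ is negative, so the right-hand side is $\le\varepsilon/2$ once $M$ is large. This step is exactly where $p>2$ is used, and it fixes $M=M(p,\varepsilon)$.

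The main obstacle is that the Fourier supports cannot be exactly disjoint, since $N_ik'=N_jk$ always has solutions. I would handle this with primes and a truncation. Fix a large parameter $P$, to be chosen after $M$, and let $N_1<\ldots<N_M$ be distinct primes, all $\ge P$. Split $g_j=g_j^{\mathrm{low}}+g_j^{\mathrm{high}}$, where $g_j^{\mathrm{low}}$ keeps the frequencies $N_jk$ with $0<|k|<P$. If $N_jk=N_ik'$ with $i\neq j$, then $N_i\mid k$, hence $|k|\ge N_i\ge P$. So the $g_j^{\mathrm{low}}$ have pairwise disjoint Fourier supports, and the computation above gives
\[
\frac1M\Bigl\|\sum_j g_j^{\mathrm{low}}\Bigr\|_{A_p}\lesssim M^{2/p-1}\varepsilon^{-1/p}.
\]

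For the tails I would use the triangle inequality in $\ell^p$:
\[
\frac1M\Bigl\|\sum_j g_j^{\mathrm{high}}\Bigr\|_{A_p}\le\max_j\|g_j^{\mathrm{high}}\|_{A_p}\lesssim\Bigl(\sum_{|k|\ge P}(\delta|k|)^{-p}\Bigr)^{1/p}\lesssim \delta^{-1}P^{-(1-1/p)}.
\]
With $M$ and hence $\delta$ already fixed, this is at most $\varepsilon/2$ for $P$ large. Combining the two pieces gives $\|f_M-1\|_{A_p}\le\varepsilon$, which completes the plan.
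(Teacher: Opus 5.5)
Your proof is correct and takes the same route as the paper: you split $f_M-1$ into $M$ pieces with (almost) disjoint frequency supports, which gives $\|f_M-1\|_{A_p}\lesssim M^{1/p-1}\delta^{-1/p}=M^{2/p-1}\varepsilon^{-1/p}$, and this is small for large $M$ precisely because $p>2$. The paper only sketches the ``almost disjoint'' step, choosing the $N_j$ inductively and citing Katznelson; your choice of distinct primes $N_j\ge P$, combined with the low/high frequency split, makes that step explicit and fully rigorous.
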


\begin{proof}
Since $\widehat{f_M}(0)=1$, it suffices to control the nonzero
Fourier coefficients. To this end, we may select $N_1<\ldots<N_M$ inductively so that the frequency supports
of the functions $\psi_{\delta,N_j}$ are almost disjoint
(cf.~\cite{katznelson1964sets}). As Katznelson
\cite{katznelson1964sets} we also get
\[
\|f_M-1\|_{A_p}
\le
2 M^{1/p-1} \delta^{-1}
\|\mathbbm{1}_{I(\delta)}\|_{A_p}.
\]
Using the norm-estimate $\|\mathbbm{1}_{I(\delta)}\|_{A_p}
\lesssim \delta^{1-1/p}$, we obtain
\[
\|f_M-1\|_{A_p}
\le
C M^{1/p-1/q} \varepsilon^{1/q-1},
\]
where $1/p+1/q=1$. Choosing $M=M(\varepsilon,p)$ sufficiently large yields the claim.
\end{proof}

\medskip

We now pass to the construction of a uniqueness set for
$\bigcap_{q>1} \ell^q $. Let $\{\varepsilon_j\}_j$ be
positive numbers with $\sum_j \varepsilon_j \le \varepsilon_0$.
Applying \thref{LEM:finiteapprox} with
$\varepsilon=\varepsilon_j$ produces sets $E(\delta_j,M_j)$ such
that
\[
m(E(\delta_j,M_j)) \ge 1-\varepsilon_j,
\qquad
\|f_{M_j}-1\|_{A_{p_j}} \le \varepsilon_j.
\]
Then
\[
E
=
\bigcap_{j=1}^\infty E(\delta_j,M_j)
\]
satisfies $m(E)\ge 1-\varepsilon_0$, and by
Proposition~\ref{PROP:CHARlq} it is a uniqueness set for
$\ell^q $. If one wishes to construct a uniqueness set
for $\bigcap_{q>1} \ell^q $, then one only has to
choose $p_j \downarrow 2$ and adjust the parameters $M_j$
accordingly. The details are omitted.

\subsection{One-sided uniqueness sets of uniform type}

This section is principally devoted to the proof of \thref{THM:MAIN2}. The second part of the statement in $(ii)$ requires us to establish the existence of sets uniqueness property measures with one-sided uniform bound on Fourier coefficients. The following result will be our foundation for its proof.

\begin{prop}\thlabel{PROP:SAlinfty} Let $\{\Omega(n)\}_n$ be positive real numbers with $\Omega(n) \downarrow 0$. Then for any $\{\delta_j\}_j$ positive real numbers with 
\[
\sum_j \delta_j <1,
\]
there exists positive integers $\{N_j\}_j$, such that the corresponding set in \eqref{EQ:EDEF} satisfies the following unilateral uniqueness property: whenever $\mu \in M(E)$ with 
\[
\abs{\widehat{\mu}(n)} \leq \Omega(n), \qquad n=1,2,3,\ldots,
\]
then $\mu \equiv 0$.
\end{prop}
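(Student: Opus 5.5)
The plan is to exploit the fact that the test sets $U_{N_j,\delta_j}$ are $N_j$-periodic. This turns the question into one about a single measure on $\T$ that vanishes on a fixed arc of length comparable to $\delta_j$ and has tiny positive Fourier coefficients. A two-constants (harmonic measure) estimate then shows that such a measure has small zeroth coefficient. The integers $N_j$ are chosen inductively, growing so fast that $\Omega(N_j/2)$ beats the (exponentially bad in $1/\delta_j$) constants in that estimate.

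\emph{Reduction to a periodized measure.} Fix $\mu\in M(E)$ with $|\widehat\mu(n)|\le\Omega(n)$ for $n\ge1$, and fix an integer $k$. For each $j$, let $\nu=\nu_{j,k}$ be the push-forward of $\zeta^{-k}\,d\mu$ under $\zeta\mapsto\zeta^{N_j}$ (with the conjugation convention matching $\widehat\mu(n)=\int\conj{\zeta}^n d\mu$). Then $\widehat\nu(m)=\widehat\mu(k+mN_j)$ and $\|\nu\|\le\|\mu\|$. Since $E$ avoids $U_{N_j,\delta_j}$, which is the full preimage of an arc $J_j$ of length $\asymp\delta_j$ under $\zeta\mapsto\zeta^{N_j}$, the measure $\nu$ vanishes on $J_j$. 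If $N_j\ge2|k|$, then for $m\ge1$ we get $|\widehat\nu(m)|\le\Omega(k+mN_j)\le\Omega(N_j/2)=:\varepsilon_j$. Moreover $\widehat\nu(0)=\widehat\mu(k)$. So it suffices to prove a quantitative lemma: if $\nu$ vanishes on an arc $J$ of length $\asymp\delta$, $\|\nu\|\le1$, and $|\widehat\nu(m)|\le\varepsilon$ for $m\ge1$, then $|\widehat\nu(0)|\le C(\delta)\,\varepsilon^{c\delta}$ for absolute $c>0$ and some explicit $C(\delta)$.

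\emph{The quantitative lemma.} First smooth $\nu$ by convolving with a kernel of the type $\phi_{\delta',l}$ from Section~\ref{sec:mainblocks}, supported in an arc of length $\delta'\ll\delta$. This produces a trigonometric-type function $\nu*\phi$ that still vanishes on a concentric subarc $J'\subset J$ of length $\asymp\delta$, is bounded by $\|\phi\|_\infty\lesssim1/\delta'$, and keeps $\widehat{(\nu*\phi)}(0)=\widehat\nu(0)$. Write $\nu*\phi=F+G$, where $F=\sum_{m\ge1}\widehat\nu(m)\widehat\phi(m)\zeta^m$ is analytic and $G=\sum_{m\le0}\widehat\nu(m)\widehat\phi(m)\zeta^m$ is anti-analytic. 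The Fourier decay of $\phi$ (property (b) there, with $l\asymp\log(1/\delta')$) gives
\[
  \|F\|_\infty\le\varepsilon\sum_{m\ge1}|\widehat\phi(m)|\lesssim\varepsilon/\delta' .
\]
Hence $\conj{G}$ extends to a bounded analytic function in the disc: $|\conj G|\lesssim1/\delta'$ on $\T$, and $|\conj G|\lesssim\varepsilon/\delta'$ on $J'$, because $F+G=0$ there. The two-constants theorem (subharmonicity of $\log|\conj G|$) with $\omega=\omega(0,J',\mathbb D)\asymp\delta$ yields
\[
  |\widehat\nu(0)|=|\conj G(0)|\lesssim\frac{\varepsilon^{\omega}}{\delta'},
\]
which is the claimed bound with $\delta'=\delta/10$, say.

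\emph{Choice of $N_j$ and conclusion.} Since $\Omega(n)\downarrow0$, we can choose $N_j$ inductively and increasing with $\Omega(N_j/2)\le\exp(-j/\delta_j^{2})$. The condition $\sum_j\delta_j<1$ only serves to guarantee $m(E)>0$, exactly as before. Then for each fixed $k$ and all $j$ with $N_j\ge2|k|$, we get $|\widehat\mu(k)|\lesssim\|\mu\|\,\delta_j^{-1}e^{-cj/\delta_j}\to0$. Hence $\widehat\mu(k)=0$ for every $k\in\mathbb Z$, and so $\mu\equiv0$.

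I expect the main obstacle to be the smoothing step in the lemma. One has to check carefully that the tail of $\sum_{m\ge1}|\widehat\phi(m)|$ and the shrinkage of $J$ to $J'$ cost only polynomial factors in $1/\delta$, so that the exponent $\omega\asymp\delta$ is not destroyed. If this turns out to be delicate, I would first try replacing $\phi$ by a Fejér kernel of degree $\asymp1/\delta$ and estimating the leakage onto $J'$ directly. The fallback is to absorb any loss by making $\Omega(N_j)$ even smaller, which is harmless since the $N_j$ are at our disposal.
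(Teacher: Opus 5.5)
Your proof is correct, but it takes a different route from the paper's.

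\emph{The paper's route.} It argues on the dual side. It builds outer functions $F_j$ by exponentiating the Herglotz integral of a smooth mean-zero $\chi_j$ with $\chi_j\equiv\log\varepsilon_j$ off $I(\delta_j)$. This gives $F_j(0)=0$ and $|F_j-1|\le\varepsilon_j$ off $I(\delta_j)$. It then sets $f_j(z)=F_j(z^{N_j})$ and takes $N_j$ so large that $\Omega(N_j)\|F_j\|_{A_1}\le\varepsilon_j$. Pairing $\overline{f_j\zeta^N}$ against $\mu$ gives $\widehat\mu(N)=0$ for $N\ge0$. The F.~and M.~Riesz theorem, together with the fact that a nonzero $H^1$ function cannot vanish on a set of positive measure, then finishes.

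\emph{Your route.} You stay on the primal side. You periodize by push-forward under $\zeta\mapsto\zeta^{N_j}$, smooth, and apply the two-constants theorem to the conjugate of the anti-analytic part. This yields the explicit bound
\[
|\widehat\mu(k)|\lesssim(\|\mu\|+1)\,\delta_j^{-1}\,\Omega(N_j/2)^{c\delta_j}
\]
for every $k\in\mathbb Z$ with $N_j\ge 2|k|$.

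\emph{Common core.} Both arguments rest on the same harmonic-measure principle. The paper's outer function is precisely the extremal for the two-constants inequality. Because $\chi_j$ has mean zero, it must be $\gtrsim\delta_j^{-1}\log(1/\varepsilon_j)$ somewhere on $I(\delta_j)$, so $\|F_j\|_{A_1}\ge\|F_j\|_\infty\gtrsim\varepsilon_j^{-c/\delta_j}$. Thus the paper's condition on $N_j$ hides the same exponential-in-$1/\delta_j$ cost that your choice $\Omega(N_j/2)\le e^{-j/\delta_j^2}$ makes explicit.

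\emph{What each route buys.} Yours is more self-contained. It kills every Fourier coefficient, positive or negative, directly, so neither the F.~and M.~Riesz theorem nor boundary uniqueness for $H^1$ is needed, and the required growth of $N_j$ is quantified. The paper's route in return produces a simultaneous approximation statement of independent interest: analytic $f_j\to1$ uniformly on $E$ while $\sum_{n\ge0}|\widehat{f_j}(n)|\Omega(n)\to0$.

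\emph{Your flagged obstacle.} The smoothing step is routine. Take $\phi$ a triangle kernel of width $\delta'=\delta/10$ with $\widehat\phi(0)=1$. It has nonnegative Fourier coefficients, so $\sum_m|\widehat\phi(m)|=\phi(1)=\|\phi\|_\infty\asymp1/\delta'$. The subarc $J'$ keeps at least half the length of $J$, and only polynomial factors in $1/\delta$ are lost. The one correction is normalization: since $\|\nu\|\le\|\mu\|$ rather than $\le1$, the bound for $|\overline{G}|$ on $\T$ should read $(\|\mu\|+\varepsilon)\,\|\phi\|_\infty$. This is harmless.
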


In the proof, we shall see that the conclusion of \thref{PROP:SAlinfty} actually holds whenever $\{N_j\}_j$ grow sufficiently rapidly. Before turning to its proof, we shall derive our second main result as a consequence.

\begin{proof}[Proof of \thref{THM:MAIN2}]
Fix a number $0<\delta<1$ and let $\{\delta_j\}_j$ be positive numbers with 
\[
\sum_j \delta_j \leq \delta, \qquad \sum_j \delta^{r-1}< \infty, \qquad \text{for all} \, \, 1<r<2.
\]
Now since both the conclusions of \thref{PROP:NUNIQAr} and
\thref{PROP:SAlinfty} remain true when we choose $\{N_j\}_j$ to
grow rapidly, we can find positive integers $\{N_j\}_j$ such that
both propositions hold simultaneously. Then the associated
set $E:= \cap_j \T \setminus U_{N_j, \delta_j}$ is of Lebesgue
measure $m(E)\geq 1-\delta$, supports a non-negative function
$f\in L^2(E)$ with
\[
\{\widehat{f}(n)\}_{n\in \mathbb{Z}} \in \bigcap_{r>1} \ell^r,
\]
but there exists no non-trivial $\mu\in M(E)$ with uniform unilateral control 
\[
\abs{\widehat \mu (n)} \leq \Omega(n), \qquad n=1,2,3,\ldots \qedhere
\]
\end{proof}

We are now ready to prove our principal construction of
uniqueness sets of measures with one-sided uniform Fourier
bound. A similar construction appeared in
\cite{limani2025fourier}, but ultimately draws a great deal of
inspiration from earlier works in \cite{khrushchev1978problem}
and \cite{katznelson1964sets}.

\begin{proof}[Proof of \thref{PROP:SAlinfty}] 
  \proofpart{1}{Simultaneous approximation:}
    
  Fix positive real numbers $\{\delta_j\}_j$ with
  $\delta:=\sum_j \delta_j <1$. Let $\{\varepsilon_j\}_j$ be
  positive reals with $\varepsilon_j \to 0$, and consider smooth
  real-valued functions $\chi_{j}$ on $\T$ with the properties
  \[
    \int_{\T} \chi_j dm =0, \qquad \chi_j \equiv \log
    \varepsilon_j \, \, \, \text{off the set} \, \, \,
    I(\delta_j).
  \]
  Consider the outer function
  \[
    F_j(z) := 1- \exp\biggl( \int_{\T} \frac{\zeta+z}{\zeta-z}
    \psi_j(\zeta) dm(\zeta) \biggr), \qquad |z|<1.
  \]
  It follows that $F_j$ are analytic in the unit-disc
  $\{|z|<1\}$, and using standard properties of the Poisson kernel in
  the unit-disc, we easily verify that
  \begin{enumerate}
  \item[(a)] $F_j(0)=0$,
  \item[(b)] $\abs{F_j(\zeta) -1}\leq \varepsilon_j$ for $\zeta \notin I(\delta_j)$.
  \end{enumerate}
  Let $\{N_j\}_j$ be positive integers to be specified later, and
  consider the functions
  \[
    f_j(z) := F_j(z^{N_j}), \qquad |z|<1.
  \]
  Then it follows that 
  \[
    \abs{f_j(\zeta)-1}\leq \varepsilon_j, \qquad \zeta \in
    \{\zeta^{N_j} \notin I(\delta_j) \} = \T \setminus U_{N_j,
      \delta_j}.
  \]
  Furthermore, since $f_j(0)=0$, we get using monotonicity of $\Omega$ that 
  \[
    \sum_{n=0}^\infty | \widehat{f_j}(n) | \Omega(n) =
    \sum_{n=1}^\infty | \widehat{F_j}(n) | \Omega(N_j n) \leq
    \Omega(N_j) \norm{F_j}_{A_1}.
  \]
  Since $\Omega(n)\to 0$, and the norm $\norm{F_j}_{A_1}$ only
  depends on $\varepsilon_j, \delta_j$, we can choose the integers
  $\{N_j\}_j$, such that
  \[
    \Omega(N_j) \norm{F_j}_{A_1} \leq \varepsilon_j, \qquad j=1,2,3,\ldots
  \]
  
  \proofpart{2}{Uniqueness property:}
  Then consider the corresponding set 
  \[
    E := \bigcap_{j} \T \setminus U_{N_j, \delta_j},
  \]
  associated with the above choice of parameters
  $\{\delta_j\}_j, \{N_j\}_j$. Observe that the above
  construction yields analytic functions $\{f_j\}_j$ in
  $\{|z|<1\}$ with smooth extensions to $\T$, and which satisfy
  the simultaneous approximation phenomenon on $E$:
  \begin{equation}\label{EQ:SAl1Omega}
    \sum_{n=0}^\infty | \widehat{f_j}(n) | \Omega(n) \to 0,
    \qquad \sup_{\zeta\in E} \abs{f_j(\zeta)-1}\to 0. 
  \end{equation}

  Fix $\mu \in M(E)$ with
  $\sup_n \Omega (|n|)^{-1} \abs{\widehat{\mu}(n)}< \infty$. Using
  \eqref{EQ:SAl1Omega} and analyticity of $\{f_j\}_j$, we get for
  any integer $N\geq0$ that
  \begin{multline*}
    \abs{\widehat{\mu}(N)} = \lim_j \abs{\int_{E}
      \conj{f_j(\zeta)\zeta^N}  d\mu(\zeta)} = \lim_j
    \abs{\sum_{n=0}^\infty \conj{\widehat{f_j}(n+N) }
      \widehat{\mu}(n)} \\
    \leq \limsup_j \sum_{n=0}^\infty | \widehat{f_j}(n) |
    \Omega(n) \sup_{n\geq 0}
    \frac{\abs{\widehat{\mu}(n)}}{\Omega(n)} =0.
  \end{multline*}
  By the F. and M.~Riesz Theorem, $d\mu = h dm$ with
  $h$ belonging to the Hardy space $H^1$. Since $\supp{\mu}\subseteq E$, we see
  that $h$ vanishes on a set of positive Lebesgue measure. Since elements of $H^1$ cannot vanish on sets of positive measures, unless $h\equiv 0$, we conclude that $\mu \equiv 0$.
\end{proof}

\section{Asymmetric uniqueness phenomena for the Fourier transform}

\subsection{Fourier transform on the real line}

It turns out that our results admit direct analogues for the Fourier transform on $\R$, in the setting of $L^q(\R)$-spaces. We state only the non-periodic counterparts of
\thref{THM:MAIN} and \thref{THM:MAIN2}.

\begin{thm}\thlabel{THM:LqR}
There exist compact sets $E \subset \R$ of positive Lebesgue measure
with the following properties:
\begin{enumerate}
\item[(i)] If $\mu \in M(E)$ is nontrivial, then
\[
\int_{\R} |\widehat{\mu}(\xi)|^q\, d\xi = \infty,
\qquad 0<q<2.
\]
\item[(ii)] There exists $\nu \in M(E)$ such that for every $A>0$
there is $C(A)>0$ with
\[
|\widehat{\nu}(\xi)| \le C(A)\,\xi^{-A},
\qquad \xi>0.
\]
\end{enumerate}
\end{thm}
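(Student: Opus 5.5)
The plan is to transfer \thref{THM:MAIN} from $\T\cong[0,1)$ to $\R$. Let $E_0\subset\T$ be the set produced by \thref{THM:MAIN}, with $m(E_0)$ close to $1$. Since the complement of $E_0$ is a union of arcs, we can rotate $E_0$ so that a small open arc around the point $0\equiv 1$ lies in $\T\setminus E_0$. Regard the rotated set as a compact set $E\subset(0,1)\subset\R$. Every measure $\mu\in M(E)$ is then simultaneously a measure on $\T$ and a measure on $\R$ supported in $[0,1]$. The periodic coefficients are samples of the Euclidean transform: with the normalisation $\widehat\mu(\xi)=\int e^{-2\pi i x\xi}d\mu(x)$ we have $\widehat\mu^{\T}(n)=\widehat\mu(n)$.

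\textbf{Part (i).} Take a nontrivial $\mu\in M(E)$ and suppose $\int_\R|\widehat\mu|^q\,d\xi<\infty$ for some $0<q<2$. Choose $\chi\in C_c^\infty(\R)$ with $\chi\equiv1$ on $E$ and support inside an interval of length less than $1$. Then $\mu=\chi\mu$, so $\widehat\mu=\widehat\chi*\widehat\mu$. Since $\widehat\chi$ is Schwartz, a standard Plancherel--P\'olya type sampling argument gives
\[
\sum_n|\widehat\mu(n)|^q\lesssim\sum_n\sup_{|\xi-n|\le1/2}|\widehat\mu(\xi)|^q\lesssim\int_\R|\widehat\mu|^q .
\]
For $q\ge1$ this follows from the local maximal estimate for entire functions of exponential type. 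For $q<1$ we need the same subharmonicity-based Plancherel--P\'olya inequality, which holds for all $q>0$ for functions of exponential type. The sampled sequence is then in $\ell^q$, contradicting \thref{THM:MAIN}(i). If one prefers to avoid $q<1$ altogether: for $q<1$ the transform $\widehat\mu$ is bounded, so finiteness of $\int|\widehat\mu|^q$ implies finiteness of $\int|\widehat\mu|^{q'}$ for any $q'\in(1,2)$ with $q'>q$, which reduces to the case $q\ge1$.

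\textbf{Part (ii).} Let $f\in L^\infty(E_0)$ be the function from \thref{THM:MAIN}(ii), transported to $E$, and set $d\nu=f\,dx$. We already know $|\widehat\nu(n)|\le C(M)n^{-M}$ for integers $n>0$; the task is to pass to all real $\xi>0$. This interpolation is the main obstacle, since positive-side decay at integers does not obviously control $\widehat\nu$ between integers. The Fourier series of $f$ on $[0,1]$ gives
\[
\widehat\nu(\xi)=\sum_n\widehat f(n)\,\widehat{\mathbbm 1_{[0,1]}}(\xi-n),
\]
but the kernel decays only like $1/|\xi-n|$, so the negative frequencies $\widehat f(n)$, $n<0$, which are merely $\ell^2$, contaminate the sum. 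The cure is to use smoothness of the Cauchy transform instead. We have $P_+f=\sum_{n\ge0}\widehat f(n)e^{2\pi i n x}$ in $C^\infty(\T)$, while $P_-f=f-P_+f$ extends analytically (in the anti-analytic direction) to the half-plane on which $e^{-2\pi i x\xi}$ decays for $\xi>0$.

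Accordingly, multiply by a smooth cutoff $\chi\equiv1$ on $E$ supported in $(0,1)$, and write $\nu=\chi P_+f+\chi P_-f$. The first term is $C_c^\infty$, so its transform decays rapidly in both directions. For the second term, shift the contour of $\int\chi(x)P_-f(x)e^{-2\pi i x\xi}dx$ into the lower half-plane. This requires replacing $\chi$ by an almost-analytic extension $\tilde\chi$ with $|\bar\partial\tilde\chi|\lesssim|\mathrm{Im}\,z|^K$. Stokes' theorem then expresses the integral as an area integral of $\bar\partial\tilde\chi\cdot P_-f\cdot e^{-2\pi i z\xi}$ over a strip below the real axis.

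To bound this, use that the periodic extension of $P_-f$ into the disc exterior has at most polynomial growth, $|P_-f(z)|\lesssim|\mathrm{Im}\,z|^{-1/2}$, because $f\in L^2$. Then
\[
\int_0^{c}y^{K-1/2}e^{-2\pi y\xi}\,dy\lesssim\xi^{-K},
\]
which yields $|\widehat\nu(\xi)|\le C(A)\xi^{-A}$ for every $A$ after choosing $K\ge A$. Finally, $E$ has positive measure and $\nu$ is nontrivial because $f\not\equiv0$.
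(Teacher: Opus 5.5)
Your proof is correct. In part (ii) it takes a heavier route than the paper, and in part (i) it supplies a step the paper leaves implicit. The paper identifies $E$ with a compact subset of $[0,1]$ and invokes Kahane's lemma, which upgrades one-sided bounds $|\widehat\mu(n)|\le\Phi(n)$ at positive integers to $|\widehat\mu(\xi)|\lesssim\Phi(\xi)$ for all $\xi>0$, for any decreasing doubling $\Phi$. The natural proof of such a lemma is the smooth-cutoff expansion you already used in (i). With $\chi\in C_c^\infty((0,1))$ and $\chi\equiv1$ on $E$, one has $\widehat\nu(\xi)=\sum_n\widehat f(n)\widehat\chi(\xi-n)$. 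For $\xi>0$ the terms with $n<0$ contribute at most $\|f\|_{L^1}\sum_{n<0}|\widehat\chi(\xi-n)|\lesssim_K(1+\xi)^{-K}$, since $\xi-n\ge\xi+|n|$. The terms with $n\ge0$ are handled by splitting at $n=\xi/2$. So the ``contamination'' you worry about is an artifact of the sharp cutoff $\mathbbm{1}_{[0,1]}$: bounded negative coefficients are harmless. Your Riesz-projection and almost-analytic-extension argument is nonetheless valid. The growth bound $|P_-f(x+iy)|\lesssim|y|^{-1/2}$ is right, and the passage $y\to0^-$ (which you should state) follows from $L^2$-convergence of Abel means. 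But since $\widehat{\chi P_-f}(\xi)=\sum_{n<0}\widehat f(n)\widehat\chi(\xi-n)$, the contour shift re-derives a one-line estimate. It also relies on super-polynomial decay (to make $\chi P_+f$ smooth) and on $f\in L^2$, whereas Kahane's lemma applies to arbitrary measures and any doubling majorant. What your route does buy is a transparent explanation of why only the positive spectrum matters. For (i), Kahane's lemma goes from $\mathbb Z$ to $\R$, while what is actually needed is the reverse sampling inequality $\sum_n|\widehat\mu(n)|^q\lesssim\int_\R|\widehat\mu|^q$. The paper does not spell this out. Your Plancherel--P\'olya argument fills that gap correctly: local maximal control from $\widehat\mu=\widehat\chi*\widehat\mu$ handles $q\ge1$, and $|\widehat\mu|\le\|\mu\|$ reduces $q<1$ to some $q'\in(1,2)$.
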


\begin{thm}\thlabel{THM:LqR2}
Let $\Omega:[0,\infty)\to[0,\infty)$ be non-decreasing with
$\Omega(t/2)\asymp \Omega(t)$ and $\Omega(t)\downarrow 0$.
Then there exists a compact sets $E\subset \R$ of positive Lebesgue measure such that:
\begin{enumerate}
\item[(i)] There exists a positive measure $\mu\in M(E)$ with
\[
\widehat{\mu}\in\bigcap_{r>1} L^r(\R).
\]
\item[(ii)] If $\nu\in M(E)$ satisfies
\[
|\widehat{\nu}(\xi)|\le \Omega(\xi),
\qquad \xi>0,
\]
then $\nu\equiv 0$.
\end{enumerate}
\end{thm}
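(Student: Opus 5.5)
The statement immediately above is \thref{THM:LqR2}, the non-periodic analogue of \thref{THM:MAIN2}. I would transfer the periodic construction to $\R$ by periodization and localization, rather than redo everything. The plan is to take a set $E_0\subset\T\cong[0,1)$ from \thref{THM:MAIN2} and set $E=E_0\cap[a,b]$ for a suitable subinterval, viewed as a compact subset of $[0,1]\subset\R$. Here $\Omega_0(n):=c\,\Omega(n)$ is the discrete sequence and $c$ is a small constant to be chosen. Actually it is cleaner to run the constructions of \thref{PROP:NUNIQAr} and \thref{PROP:SAlinfty} directly, with $\{N_j\}$ growing fast enough that both hold. Keeping $E\subset(0,1)$ compact, we want $m(E)>0$.

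\textbf{Part (i).} Take the non-negative $f\in L^2(E_0)$ with $\{\widehat f(n)\}\in\bigcap_{r>1}\ell^r$, and put $d\mu=f\,\mathbbm 1_{[0,1)}\,dx$, with $E_0$ chosen inside $[\eta,1-\eta]$. This can be arranged by intersecting with a closed arc, which still supports a nonnegative such function after multiplying by a smooth bump. The bump multiplication is harmless: with $\chi\in C^\infty$ supported in $(0,1)$, the coefficients of $\chi f$ are the convolution of those of $f$ with a rapidly decaying sequence, which preserves every $\ell^r$, $r>1$. For $\xi\in\R$ we have $\widehat\mu(\xi)=\sum_n \widehat{\chi f}(n)\,\widehat{\mathbbm 1_{[0,1)}}(\xi-n)$ up to normalization. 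A cleaner route is to write $\mu=\tilde\chi\cdot(\chi f)$ with another bump $\tilde\chi\equiv1$ on the support, so that $\widehat\mu=\sum_n \widehat{\chi f}(n)\widehat{\tilde\chi}(\xi-n)$. A Young-type inequality, using the Schwartz decay of $\widehat{\tilde\chi}$, then gives $\|\widehat\mu\|_{L^r(\R)}\lesssim\|\widehat{\chi f}\|_{\ell^r}$. Hence $\widehat\mu\in\bigcap_{r>1}L^r$.

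\textbf{Part (ii).} Suppose $\nu\in M(E)$ with $|\widehat\nu(\xi)|\le\Omega(\xi)$ for $\xi>0$. Since $E\subset(0,1)$, its periodization is the same measure on $\T$, and its Fourier coefficients are $\widehat\nu(n)$ for integer $n$. Thus $|\widehat\nu(n)|\le\Omega(n)$ for $n\ge1$, and \thref{PROP:SAlinfty} applied with $\Omega$ gives $\nu\equiv0$. The hypotheses $\Omega(t/2)\asymp\Omega(t)$ and monotonicity are then not even needed. I suspect they are used because the authors instead dilate the set, e.g.\ to allow $E$ of length larger than $1$ or to transfer via $\xi\mapsto\xi/2$ after rescaling into $[0,1/2]$. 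I would rescale into $[0,1/2]$ if periodizing with period $1$ caused overlap concerns, and invoke doubling there.

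\textbf{Main obstacle.} The real work is checking that the restriction $E_0\cap[\eta,1-\eta]$ retains both properties. Uniqueness passes to subsets trivially. Non-uniqueness needs the supported function to have mass inside the subinterval. This holds for some subinterval of length less than $1$, since $f\ge0$ is nontrivial. Multiplying by the bump, which is nonnegative and positive on a set where $f>0$, keeps the function nontrivial and preserves the $\ell^r$ bounds as argued. The other delicate point is the $\ell^r\to L^r$ transference estimate; I would prove it by writing $\widehat\mu(\xi)$ as the discrete convolution above and applying Young's inequality on $\mathbb Z\times[0,1)$.
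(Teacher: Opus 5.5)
Your argument is correct, but it is organized differently from the paper's. The paper handles \thref{THM:LqR2} in one line. It identifies the set from \thref{THM:MAIN2} with a compact subset of $[0,1]$ and invokes Kahane's lemma, which upgrades integer bounds $|\widehat{\mu}(2\pi n)|\le\Phi(2\pi n)$ to $|\widehat{\mu}(\xi)|\lesssim\Phi(\xi)$ on the half-line. That lemma is where the doubling hypothesis $\Omega(t/2)\asymp\Omega(t)$ comes from, not from any dilation of the set as you guessed.

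You instead treat the two parts separately, and each step is sound.
\begin{itemize}
\item[(ii)] Only the trivial direction is needed: a bound on $(0,\infty)$ restricts to the positive integers, and \thref{PROP:SAlinfty} applies verbatim. So doubling is superfluous there. Your constant $c$ in $\Omega_0=c\,\Omega$ is also unnecessary, since the proof of \thref{PROP:SAlinfty} only uses $\sup_n|\widehat{\nu}(n)|/\Omega(n)<\infty$.
\item[(i)] You supply an explicit $\ell^r\to L^r(\mathbb{R})$ transference. Take $\tilde\chi\in C_c^\infty((0,1))$ equal to $1$ on the support. Then $\widehat{\mu}(\xi)=\sum_n\widehat{f}(n)\,\widehat{\tilde\chi}(\xi-n)$. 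H\"older/Young together with the Schwartz decay of $\widehat{\tilde\chi}$ give $\|\widehat{\mu}\|_{L^r(\mathbb{R})}\lesssim_r\|\{\widehat{f}(n)\}\|_{\ell^r}$ for every $r\ge1$.
\end{itemize}

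The transference in (i) is a point the paper's route does not really address. Kahane's lemma, as stated, transfers decreasing doubling pointwise majorants. Membership in $\bigcap_{r>1}\ell^r$ does not by itself produce such a majorant lying in $\bigcap_{r>1}L^r$: sparse spikes $a_{\pm 2^k}=1/k$ show this. Your argument is therefore the more self-contained justification of (i), while the paper's Kahane route is really tailored to the pointwise-decay statement in \thref{THM:LqR}(ii).

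One simplification: your localization to $[\eta,1-\eta]$ and the extra bump $\chi$ are harmless but not needed. Every $U_{N_j,\delta_j}$ contains an open arc around $\zeta=1$. Hence the set from \thref{THM:MAIN2} already lies compactly inside $(0,1)$, and you can apply the cutoff $\tilde\chi$ directly to $f$.
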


Both theorems follow directly from their periodic counterparts by
identifying the sets constructed in \thref{THM:MAIN} and
\thref{THM:MAIN2} with compact subsets of $[0,1]\subset\R$ and
invoking the following simple observation of J.-P. Kahane
{\cite[Lemma~1, p.~252]{kahane1985some}}.

\begin{lemma}[J.-P. Kahane]
  Let $\Phi$ be a continuous decreasing function on $[0,\infty)$
  with $\Phi(t/2)\asymp \Phi(t)$. If $\mu$ is a compactly
  supported finite Borel measure on $\R$ such that
  \[
    |\widehat{\mu}(2\pi n)| \le \Phi(2\pi n),
    \qquad n=1,2,\ldots,
  \]
  then there exists $C(\Phi)>0$ such that
  \[
    |\widehat{\mu}(\xi)| \le C(\Phi)\Phi(\xi),
    \qquad \xi>0.
  \]
  An analogous statement holds for $\xi<0$, if we assume
  unilateral decay of negative frequencies.
\end{lemma}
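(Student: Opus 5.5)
The plan is to prove the inequality for $\xi$ in each dyadic window $[2\pi N, 2\pi(N+1)]$. On such a window I would reconstruct $\widehat\mu(\xi)$ from the integer samples $\widehat\mu(2\pi m)$ with $m$ ranging over a range comparable to $N$. Put $\xi = 2\pi(N+t)$ with $0\le t<1$.

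First I reduce to the periodic setting. After a translation, $\mu$ is supported in a compact interval $[-L,L]$. I choose a smooth bump $\chi\in C_c^\infty(\R)$ with $\chi\equiv 1$ on $[-L,L]$, so that $\mu = \chi\mu$. The naive route is to periodize $\mu$ to $\mu_{\rm per}$ on $\R/\Z$, whose Fourier coefficients are exactly $\widehat\mu(2\pi n)$. However, recovering $\mu$ from $\mu_{\rm per}$ requires $\mu$ to live inside a period. So I would instead rescale so that $\supp\mu\subset(-1/4,1/4)$ and pick $\chi$ supported in $(-1/2,1/2)$. Then $\mu=\chi\mu_{\rm per}$ as measures on $\R$, and therefore
\[
\widehat\mu(\xi)=\sum_{n\in\Z}\widehat\mu(2\pi n)\,\widehat\chi(\xi-2\pi n).
\]
The rescaling changes $\Phi$ only by a fixed dilation, and the doubling hypothesis absorbs this. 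This identity is a Poisson-type interpolation formula, and it converges absolutely because $\widehat\chi$ is Schwartz and $|\widehat\mu|\le\|\mu\|$.

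The second step is to split the sum according to $|n-(N+t)|$. For $n\ge (N+t)/2$, monotonicity and the doubling condition give $\Phi(2\pi n)\le\Phi(\xi/2)\lesssim\Phi(\xi)$. Since $\sum_n|\widehat\chi(\xi-2\pi n)|$ is bounded uniformly in $\xi$, this part is at most $C\Phi(\xi)$. The terms with $n<\xi/(4\pi)$, which include all negative $n$, are bounded by $\|\mu\|\sum_{|k|>\xi/4}|\widehat\chi(k)|=O_A(\xi^{-A})$ for every $A$.

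The main obstacle is comparing this rapidly decaying tail with $\Phi(\xi)$. The doubling condition gives $\Phi(\xi)\ge \Phi(1)\,2^{-c\log_2\xi}=c'\xi^{-c}$, so $\Phi$ decays at most polynomially. Hence I would choose $A>c$, which makes the tail $\lesssim\Phi(\xi)$ for $\xi\ge1$. On the bounded range $0<\xi<1$ the estimate is trivial, since $|\widehat\mu|\le\|\mu\|$ and $\Phi$ is bounded below there. The statement for $\xi<0$ follows by applying the result to the reflected measure.
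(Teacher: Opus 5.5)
\paragraph{The rescaling step fails.} A dilation does not just replace $\Phi$ by a dilate of itself. It also moves the lattice on which $\widehat\mu$ is controlled. Let $\nu$ be the image of $\mu$ under $x\mapsto x/K$. Then $\widehat\nu(\xi)=\widehat\mu(\xi/K)$, so the hypothesis controls $\widehat\nu$ only on $2\pi K\mathbb{Z}$. Your interpolation formula for $\nu$, with a cutoff adapted to period $1$, needs $\widehat\nu(2\pi n)=\widehat\mu(2\pi n/K)$ for every $n$, and nothing is known about these values. If you use the lattice $2\pi K\mathbb{Z}$ that you do control, the dual period is $1/K$. Then $\supp{\nu}$ fills exactly the same fraction of a period as $\supp{\mu}$ did. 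The ratio of the support's diameter to the period dual to the sampling lattice is dilation invariant, so no rescaling can put $\mu$ inside a single period. This cannot be patched, because the lemma as literally stated is false. For $\mu=\delta_0-\delta_1$ one has $\widehat\mu(2\pi n)=1-e^{-2\pi i n}=0$ for all $n$, so the hypothesis holds for every admissible $\Phi$. Yet $|\widehat\mu(\xi)|=2|\sin(\xi/2)|$ does not tend to zero.

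\paragraph{The missing hypothesis and the fix.} What is missing is a support restriction, which is part of Kahane's original lemma. In the present normalization, $\mu$ must be carried by a compact subset of an open interval of length one, i.e.\ $\mathrm{diam}\,\supp{\mu}<1$. This holds in the paper's application: each $E\subset\T$ there has nonempty open complement, so after a rotation it lies in a compact subset of $(0,1)$. Under this hypothesis your argument works with no rescaling. Translate so that $\supp{\mu}\subset[0,b]$ with $b<1$, and take $\chi\in C_c^\infty((b-1,1))$ with $\chi\equiv 1$ on $[0,b]$. Then $\chi$ vanishes on $\supp{\mu}+k$ for every $k\neq 0$, so $\mu=\chi\mu_{\mathrm{per}}$ and $\widehat\mu(\xi)=\sum_n\widehat\mu(2\pi n)\widehat\chi(\xi-2\pi n)$. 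From there the rest of your argument is correct:
\begin{itemize}
\item the splitting at $2\pi n\approx \xi/2$ and the doubling bound for the near terms;
\item the Schwartz tail estimate, which is really a sum over the points $\xi-2\pi n$ with $|\xi-2\pi n|>\xi/2$, not over integers $k$;
\item the comparison with the lower bound $\Phi(\xi)\gtrsim \xi^{-\log_2 C}$, where $\Phi(t/2)\le C\Phi(t)$. This uses the implicit assumption $\Phi>0$.
\end{itemize}
The paper gives no proof of this lemma and only cites Kahane. With the corrected hypothesis, your cutoff-and-sampling argument is the standard proof.
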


Applying this lemma to the measures constructed in the periodic setting
transfers the discrete Fourier decay on $\mathbb{Z}$ to uniform decay on
$\R$, which yields \thref{THM:LqR} and \thref{THM:LqR2}.

\bibliographystyle{siam}
\bibliography{mybib}

\Addresses

\end{document}